\theoremstyle{definition}
\newtheorem{defn}{Definition}[section]
\theoremstyle{plain}
\newtheorem{thm}{Theorem}[section]
\newtheorem{prop}[thm]{Proposition}
\newtheorem{lem}[thm]{Lemma}
\newtheorem*{clm}{Claim}
\theoremstyle{remark}
\newtheorem*{rmk}{Remark}
\newcommand{\ve}{\varepsilon}
\newcommand{\hrho}{\hat{\rho}}
\newcommand{\mbba}{\mathbb{A}}
\newcommand{\mbbn}{\mathbb{N}}
\newcommand{\mbbp}{\mathbb{P}}
\newcommand{\mbbq}{\mathbb{Q}}
\newcommand{\mbbr}{\mathbb{R}}
\newcommand{\mbbz}{\mathbb{Z}}
\newcommand{\mbfa}{\mathbf{A}}
\newcommand{\mbfb}{\mathbf{B}}
\newcommand{\mbfg}{\mathbf{G}}
\newcommand{\mbfh}{\mathbf{H}}
\newcommand{\mbfp}{\mathbf{P}}
\newcommand{\mclh}{\mathcal{H}}
\newcommand{\mclm}{\mathcal{M}}
\newcommand{\mclx}{\mathcal{X}}
\newcommand{\tx}{\tilde{x}}
\newcommand{\tX}{\tilde{X}}
\newcommand{\tY}{\tilde{Y}}
\newcommand{\dif}{\,\mathrm{d}}
\newcommand{\GL}{\mathrm{GL}}
\newcommand{\SL}{\mathrm{SL}}
\DeclareMathOperator{\tr}{tr}
\DeclareMathOperator{\supp}{supp}
\title[Lyapunov maximizing measures for balanced pairs]{Lyapunov maximizing measures for balanced pairs of matrices}
\author[R. Gao]{Rui Gao}
\address{Rui Gao: School of Mathematical Sciences, Qufu Normal University, Jining 273165, China}
\email{gaoruimath@qfnu.edu.cn}
\begin{document}

\begin{abstract}
We show that every balanced pair (see Definition~\ref{defn:balanced pair}) of real $2\times 2$ matrices admits a unique Lyapunov maximizing measure, and the measure is always Sturmian. 
\end{abstract}

\maketitle


\section{Introduction}

Let $\GL(2,\mbbr)$ denote the general linear group of $2\times 2$ matrices over $\mbbr$. We shall mainly consider matrices in $\GL(2,\mbbr)$ with positive determinant, so let us denote
\[
\mbfg:=\{X\in \GL(2,\mbbr) : \det X >0\}.
\]
For each $X\in\mbfg$, let $\rho(X)$ denote its spectral radius. Given $\mbba=(A,B)\in\mbfg^2$, let
\[
\hrho(\mbba):=\limsup_{n\to\infty} \left(\max\{\rho(X_1\cdots X_n):X_1,\cdots,X_n\in \{A,B\} \}\right)^{1/n} \in (0,+\infty),
\]
called the {\bf joint spectral radius} of $\mbba$. If the limit superior above can be attained by a finite product $X_1\cdots X_n$,  then $X_1\cdots X_n$ is called a {\bf spectrum maximizing product} of $\mbba$. The {\bf finiteness conjecture} of Lagarias and Wang \cite{LW95} asserts that a spectrum maximizing product always exists for any $\mbba$. Actually the concept of joint spectral radius and the finiteness conjecture can be stated under more general settings (see, for example, \cite{LW95,Jun09}), but in this note we are only concerned about the special version described above. 

Let us restate the finiteness conjecture in the language of ergodic theory. Let $\mbbn=\{1,2,\cdots\}$ and let $\sigma:\{0,1\}^\mbbn\to\{0,1\}^\mbbn$ be the one-sided full shift. Let $\mclm$ denote the collection of $\sigma$-invariant Borel probability measures on $\{0,1\}^\mbbn$, endowed with the weak-* topology. Given $\mbba=(A_0,A_1)\in \mbfg^2$ and a matrix norm $\|\cdot\|$, for every $\mu\in\mclm$, by the sub-additive ergodic theorem, the limit
\begin{equation}\label{eq:LyaExp}
  \chi(\mbba,\mu):=\lim_{n\to\infty}\frac{1}{n}\int_{\{0,1\}^\mbbn} \log \|A_{i_n}\cdots A_{i_2}A_{i_1}\|\dif\mu(i_1i_2\cdots i_n\cdots)
\end{equation}
exists (independent of the choice of $\|\cdot\|$), and it is called the (top) {\bf Lyapunov exponent} of $\mbba$ with respect to $\mu$. It is well-known that
\[
\sup\{\chi(\mbba,\mu):\mu\in \mclm\} =\log \hrho(\mbba).
\]
If $\mu\in\mclm$ attains the supremum above, then it is called a ({\bf Lyapunov}) {\bf maximizing measure} of $\mbba$. Then the finiteness conjecture can be restated as: there always exists a maximizing measure supported on a periodic orbit of $\sigma$. The maximizing measures of $\mbba$ introduced above can also be defined under more general settings; see, for example, \cite{Mor13,Boc18}.   

The finiteness conjecture was disproved by Bousch and Mairesse \cite{BM02}, who constructed the first examples of $\mbba\in\mbfg^2$ which admits no spectrum maximizing product; such $\mbba$ with be called a {\bf finiteness counter-example}. The basic idea of their construction is roughly as follows: choose a pair $(A,B)\in \mbfg^2$ carefully and show that there exist (uncountably many) finiteness counter-examples in the one-parameter family $(A,tB),t>0$. To the best of our knowledge, all the known finiteness counter-examples in the literature were constructed in this way, including \cite{BM02,BTV03,Koz05,HMST11,MS13,JP18,Ore18} among others (we shall provide a different approach in Proposition~\ref{prop:family}). Moreover, it turns out that every pair $(A,B)\in \mbfg^2$ in all the constructions cited above is a balanced pair defined below; see Proposition~\ref{prop:balanced} for more details. 

Let $I\in\mbfg$ denote the identity matrix and let $\SL(2,\mbbr)=\{X\in\mbfg:\det X=1\}$. Define
\[
\mbfa:=\{A\in \SL(2,\mbbr) : \tr A\ge 2\}\setminus\{I\}=\mbfh\cup\mbfp,
\]
where $\mbfh=\{A\in \mbfa: \tr A>2\}$ and $\mbfp=\{A\in \mbfa: \tr A=2\}$. Define
\begin{equation}\label{eq:hyperbolic}
  H_{s,u}^\lambda:=
\begin{pmatrix}
  s & u \\
  1 & 1 
\end{pmatrix}
\begin{pmatrix}
  \lambda & 0 \\
  0 & \lambda^{-1}
\end{pmatrix}
\begin{pmatrix}
  s & u \\
  1 & 1 
\end{pmatrix}^{-1}\in\mbfh \qquad\text{for}\quad \lambda>1, u<0<s,
\end{equation}
and
\begin{equation}\label{eq:parabolic}
  P_x:= \begin{pmatrix} 1 & 0 \\ x & 1 \end{pmatrix}\in\mbfp \qquad\text{for}\quad x>0.
\end{equation}
Let $A^t$ denote the transpose of a matrix $A$. 

\begin{defn}\label{defn:balanced pair}
$\{A,B\}\subset \mbfa$ is called a {\bf balanced pair}, if there exists $T\in \GL(2,\mbbr)$ such that for $\mbba=\{T^{-1}AT, T^{-1}BT\}$ (called a {\bf normal form} of $\{A,B\}$), one of the following mutually exclusive conditions holds: 
\begin{itemize}
  \item [(h)] $\mbba=\{H_{s_1,u_1}^{\lambda_1}, H_{s_2,u_2}^{\lambda_2}\}$ with $u_2<u_1<0<s_1<s_2$;
  \item [(m)] $\mbba=\{H_{s,u}^\lambda, P_x\}$;
  \item [(p)] $\mbba=\{P_x,P_y^t\}$.
\end{itemize}
In the case (h), $\{A,B\}$ is called a {\bf co-parallel pair}. For $\{A,B\}\subset \mbfg$, if $\{aA,bB\}\subset\mbfa$ is a balanced (resp. co-parallel) pair for some $a,b\in\mbbr$, then $\{A,B\}$ is also called a {\bf balanced} (resp. {\bf co-parallel}) {\bf pair}. A balanced pair $\{A,B\}\subset \mbfg$ will also be denoted as an ordered pair $(A,B)\in \mbfg^2$.  
\end{defn} 
\begin{rmk}
 It is easy to see that our definition of co-parallel pairs coincides with that given at the beginning of \cite{JS90}, or that in \cite[Definition~3.3]{Las25}. Taking the finiteness counter-examples given by \cite{BTV03,HMST11} into consideration, we generalize this definition to balanced pairs, which is close to the concept ``well oriented" in \cite[Definition~2.1]{PS21}. 
\end{rmk}

To provide more details of the argument originated from \cite{BM02} and to introduce our main result, let us recall the concept of Sturmian measure first. For each $\alpha\in[0,1]$, let $s_\alpha\in\{0,1\}^\mbbn$ be defined by 
\begin{equation}\label{eq:rotation}
  s_\alpha=i_1i_2\cdots\cdots, \qquad i_n=\lfloor n\alpha\rfloor-\lfloor (n-1)\alpha\rfloor, \forall n\in \mbbn.
\end{equation}
Here for $x\in \mbbr$, $\lfloor x\rfloor$ denotes the maximal integer not exceeding $x$. Let $S_\alpha$ denote the $\omega$-limit set of $s_\alpha$. It is well-known that the sub-shift $S_\alpha$ supports a unique measure in $\mclm$, called the {\bf Sturmian measure of slope $\alpha$}, and denoted by $\mu_\alpha$. Note that $\mu_\alpha$ is supported on a periodic orbit iff its slope $\alpha$ is rational. The statements above can be found in, for example, \cite[Theorem~2.1]{MS13}. 
For certain (balanced) pair $(A,B)\in\mbfg^2$ and an associated interval $I\subset(0,+\infty)$, in \cite{BM02,Koz05,HMST11,MS13,JP18} among others, the authors studied the one-parameter family $(A,tB),t\in I$ and proved the following:
\begin{itemize}
  \item for every $t\in I$, $(A,tB)$ admits a unique maximizing measure, which is a Sturmian measure of slope $\alpha(t)$;
  \item the map $t\mapsto \alpha(t),t\in I$ is continuous and non-constant (even of range $[0,1]$).
\end{itemize}
As a result, for every $t\in I$ with $\alpha(t)\notin\mbbq$, $(A,tB)$ serves as a finiteness counter-example. 

A result of J{\o}rgensen and Smith \cite[Theorem~5-4]{JS90} (see also Theorem~\ref{thm:J-S} for a slightly generalized version) is closely related to the way of searching finiteness counter-examples in co-parallel pairs described above. It seems that this fact did not receive much attention until the recent works \cite{PS21,Las25}. In particular, based on \cite[Theorem~5-4]{JS90}, recently Laskawiec \cite[Theorem~9.8]{Las25} proved that for any co-parallel pair $\mbba\in \mbfg^2$, $\mbba$ admits exactly one Sturmian maximizing measure. Using a variation of J{\o}rgensen and Smith's theorem (see Proposition~\ref{prop:J-S}) as a key ingredient, we can prove that the maximizing measure of such $\mbba$ is always unique, confirming a suspicion of Laskawiec \cite[Note~9.9]{Las25}. Our main result in this note reads as follows.

\begin{thm}\label{thm:main}
  Let $\mbba\in \mbfg^2$ be a balanced pair. Then $\mbba$ admits a unique Lyapunov maximizing measure; moreover, this measure is Sturmian. 
\end{thm}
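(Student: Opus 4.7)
My plan is to reduce $\mbba$ to one of the three normal forms (h), (m), (p) in Definition~\ref{defn:balanced pair} and analyze each, using Proposition~\ref{prop:J-S} as the structural engine. Conjugation preserves Lyapunov exponents, so the only nontrivial part of the reduction is the scaling $(A,B)\mapsto(aA,bB)$, which modifies the objective to
\[
\chi((aA,bB),\mu)=\chi((A,B),\mu)+\mu([0])\log a+\mu([1])\log b.
\]
Since this correction is an affine functional of $\alpha(\mu):=\mu([1])$, it suffices to prove that, for $\mbba$ in normal form and for \emph{every} real $L$, the functional $\mu\mapsto\chi(\mbba,\mu)+L\alpha(\mu)$ has a unique maximizer and it is Sturmian; this then transfers back to the original pair. (In particular one would check that, among Sturmian measures, $\alpha\mapsto\chi(\mbba,\mu_\alpha)$ is strictly concave, so adding a linear term in $\alpha$ merely shifts the unique maximizing slope.)

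For case (h), existence of a Sturmian maximizer is already provided by \cite[Theorem~9.8]{Las25}. To upgrade to uniqueness, I would apply Proposition~\ref{prop:J-S} to the projective action of the semigroup generated by $\mbba$ on $\mathbb{RP}^1$. The expected output is a Barabanov-type extremal norm whose unit ball has a convex boundary arc along which both generators act as orientation-preserving homeomorphisms of an interval, making the induced ``skew-product'' on this arc semiconjugate to an orientation-preserving circle map with well-defined rotation number $\alpha$. Any Lyapunov maximizing measure pushes forward to a rotation-invariant measure under this map; since a circle rotation of fixed angle admits a \emph{unique} invariant Borel probability measure, and this measure pulls back to $\mu_\alpha$, uniqueness and the Sturmian property follow together.

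For the mixed case (m) and the parabolic case (p), I would first establish them by perturbation: replace each parabolic generator $P_x$ by a one-parameter family of hyperbolic matrices $H^{(\varepsilon)}$ sharing its fixed point, chosen so that the perturbed pair $\mbba_\varepsilon$ lies in case (h) and $\mbba_\varepsilon\to\mbba$ as $\varepsilon\to 0^+$. Case (h) furnishes a unique Sturmian maximizer $\mu_{\alpha(\varepsilon)}$, and by compactness of $\mclm$ and upper semicontinuity of $(\mbba,\mu)\mapsto\chi(\mbba,\mu)$ the limit of any convergent subsequence is a Sturmian maximizer for $\mbba$. To force uniqueness in the limit (and identify the slope), I would apply Proposition~\ref{prop:J-S} again, noting that its hypotheses should still be verifiable directly in cases (m) and (p) — the projective analysis is parallel, with the neutral fixed point of a parabolic generator playing a role analogous to one of the hyperbolic fixed points.

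The main obstacle will be the parabolic cases. The projective action is no longer uniformly contracting at a parabolic fixed point, so the ``domination'' step underlying Proposition~\ref{prop:J-S} requires separate justification; in particular, showing that the Sturmian slope $\alpha(\varepsilon)$ varies continuously up to $\varepsilon=0$ (so that the limit measure is a single $\mu_{\alpha_0}$ and not a nontrivial convex combination of several Sturmians) demands careful control of the rotation number in the degenerate limit. A secondary subtlety is verifying the strict concavity of $\alpha\mapsto\chi(\mbba,\mu_\alpha)$ used in the reduction step, which, while standard in the hyperbolic regime, needs checking for the parabolic and mixed normal forms.
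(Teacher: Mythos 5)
Your proposal does not follow the paper's route, and as it stands it has genuine gaps. The most important one is that you never explain how Proposition~\ref{prop:J-S} --- which is a statement about traces of \emph{finite words} --- gets converted into a statement about \emph{measures}. The paper's mechanism is: if some maximizing measure is not Sturmian, then (after passing to an ergodic component, using Lemma~\ref{lem:extremal}~(ii) and the strict concavity of $\alpha\mapsto\chi(\mbba,\mu_\alpha)$ to rule out multiple Sturmian maximizers) its support contains a recurrent non-balanced point $s$; one then takes a long factor $t=w_0x_1w_0\cdots w_0x_nw_0$ of $s$, applies \eqref{eq:trace larger} iteratively $n$ times to manufacture a word $t'$ of the same length and same number of $1$'s with $\tr[t']_\mbba\ge\xi^n\tr[t]_\mbba$, converts traces to norms via the cone estimates (Lemma~\ref{lem:trace-norm}, resp.\ Lemma~\ref{lem:trace-norm parabolic} for words avoiding the blocks $0^N,1^N$), and contradicts the two-sided extremal-norm bounds $C_\mbba^{-1}\hrho(\mbba)^{|w|}\le\|[w]_\mbba\|\le C_\mbba\hrho(\mbba)^{|w|}$ valid on supports of maximizing measures. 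None of this appears in your plan, and without it Proposition~\ref{prop:J-S} is inert. Relatedly, your description of Proposition~\ref{prop:J-S} as producing ``a Barabanov-type extremal norm whose unit ball has a convex boundary arc'' is not what that proposition says; you are importing a different (Bousch--Mairesse style) strategy and attributing it to the wrong tool.

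The circle-map uniqueness argument for case (h) also has a hole: a circle homeomorphism (or rotation) with \emph{rational} rotation number is not uniquely ergodic, so ``rotation-invariant measure pulls back to $\mu_\alpha$'' fails precisely in the generic case where the maximizing slope is rational and several periodic orbits may share that rotation number; a semiconjugacy can moreover collapse sets carrying distinct invariant measures. Finally, your perturbation scheme for cases (m) and (p) can at best produce \emph{existence} of a Sturmian maximizer as a limit of the $\mu_{\alpha(\varepsilon)}$; it says nothing about other maximizing measures of the limiting pair, and you explicitly defer that to ``applying Proposition~\ref{prop:J-S} again'' without saying how. The paper avoids the perturbation entirely: it treats parabolic generators directly by restricting to words in $\mclh_N$ (equivalently to the subshift of finite type $\Sigma_N$ excluding $0^N$ and $1^N$), where the pair is uniformly hyperbolic, and proves the needed trace--norm comparison and the continuity/strict concavity of $\alpha\mapsto\chi(\mbba,\mu_\alpha)$ there (Lemmas~\ref{lem:trace-norm parabolic} and~\ref{lem:concave parabolic}). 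Your reduction of the scaling $(A,B)\mapsto(aA,bB)$ to a linear correction in $\alpha$ is sound in spirit, but the paper needs much less: since $|t|=|t'|$ and $|t|_1=|t'|_1$ in the key inequality, the scaling factors cancel and one may simply assume $\mbba\in\mbfa^2$.
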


The rest of this note is organized as follows. In \S~\ref{sse:words} we recall the notion of balanced words, which is closely related to Sturmian measures.  In \S~\ref{sse:pair} we discuss basic properties of balanced pairs.  In \S~\ref{se:J-S} we prove Proposition~\ref{prop:J-S}, following the original argument in \cite{JS90}. In \S~\ref{se:main} we deduce Theorem~\ref{thm:main} from Proposition~\ref{prop:J-S}. Some further discussions related to Theorem~\ref{thm:main} will be given in \S~\ref{se:FD}. Finally, in \S~\ref{se:J-S proof} we provide a proof of Theorem~\ref{thm:J-S}.

\section{Preliminaries}\label{se:pre}

\subsection{Balanced words}\label{sse:words}

Given an integer $n\ge 0$, an element in $\{0,1\}^n$ is called a {\bf finite word} of {\bf length} $n$. By definition, $\{0,1\}^0$ consists of a unique element, called the {\bf empty word}, denoted by $\epsilon$.  A finite word of length $n\ge 1$ is denoted by $i_1\cdots i_n$ for $i_1,\cdots,i_n\in\{0,1\}$. Let $\{0,1\}^*=\cup_{n=0}^\infty\{0,1\}^n$ denote the collection of all finite words. Recall that $\{0,1\}^*$ can be seen as a free monoid with generator $0,1$ and identity $\epsilon$, by defining the product of $u=i_1\cdots i_m$ and $v=j_1\cdots j_n$ as $uv=i_1\cdots i_m j_1\cdots j_n$. 
For each $n\ge 0$, let $u^n$ denote the $n$-th power of a finite word $u$; in particular, $u^0=\epsilon$. An element in $\{0,1\}^\mbbn$ is called an {\bf infinite word}, which is denoted by $i_1i_2\cdots$ for $i_1,i_2,\cdots\in\{0,1\}$. 
The product $uv$ of a finite word $u$ and an infinite word $v$ can be defined in a similar way. 

From now on, by saying a {\bf word}, we mean either a finite or an infinite word. Given a word $x$ and a finite word $v$, $v$ is called a {\bf factor} of $x$, if there exists a finite word $u$ and a word $w$ such that $x=uvw$. For a finite word $w$, let $|w|$ denote its length. When $w=i_1\cdots i_n$ for some $n\ge 1$, define $|w|_1=i_1+\cdots+i_n$. Define $|\epsilon|_1=0$. A word $w$ is called {\bf balanced}, if for any two factors $u$ and $v$ of $w$, $|u|=|v|$ implies that $||u|_1-|v|_1|\le 1$.
 

Sturmian measures mentioned in the introduction are closely related to balanced words. In the three lemmas below we collect some well-known facts about balanced words that will be used later. For more information on balanced words and their relation to Sturmian measures, we refer to \cite{BM02,Lot02,MS13} and references therein. Recall that for each $\alpha\in [0,1]$, $S_\alpha$ denotes the $\omega$-limit set of $s_\alpha$ defined by \eqref{eq:rotation}. An infinite word $s$ is called {\bf recurrent}, if any factor of $s$ occur infinitely often in $s$; or equivalently, $s$ is contained in the $\omega$-limit set of itself.

\begin{lem}\label{lem:recurrent and balanced}
  For $s\in\{0,1\}^\mbbn$, $s$ is recurrent and balanced iff there exists $\alpha\in[0,1]$ such that $s\in S_\alpha$.
\end{lem}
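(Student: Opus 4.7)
The plan is to handle the easy direction directly from the rotation formula \eqref{eq:rotation}, and for the hard direction first to extract the rotation slope via a Fekete-type argument and then identify $s$ as a mechanical coding of the rotation by $\alpha$, which lies in $S_\alpha$.

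\emph{Sufficiency}: assume $s\in S_\alpha$. A length-$n$ factor $u$ of $s_\alpha$ occupying positions $k{+}1,\ldots,k{+}n$ telescopes to $|u|_1=\lfloor(k+n)\alpha\rfloor-\lfloor k\alpha\rfloor\in\{\lfloor n\alpha\rfloor,\lceil n\alpha\rceil\}$, so $s_\alpha$ is balanced; since the factor set of any $s\in S_\alpha$ is contained in that of $s_\alpha$, $s$ inherits balancedness. Recurrence of $s$ follows from $S_\alpha$ being a minimal subshift when $\alpha\notin\mbbq$ (by minimality of the irrational rotation), and a single periodic orbit when $\alpha\in\mbbq$.

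\emph{Necessity}: assume $s$ is recurrent and balanced. Set $a_n:=\min\{|u|_1:u\text{ a factor of }s,\,|u|=n\}$ and $b_n:=\max\{|u|_1:u\text{ a factor of }s,\,|u|=n\}$. Splitting a length-$(m+n)$ factor as a concatenation yields $a_{m+n}\ge a_m+a_n$ and $b_{m+n}\le b_m+b_n$, while balancedness gives $b_n\le a_n+1$. Fekete's lemma shows $a_n/n$ and $b_n/n$ both converge; since $a_n\le b_n\le a_n+1$, the two limits agree, to a common value $\alpha\in[0,1]$. The sandwich $a_n\le n\alpha\le b_n\le a_n+1$ then pins $a_n=\lfloor n\alpha\rfloor$ and $b_n=\lceil n\alpha\rceil$, so every length-$n$ factor $u$ of $s$ satisfies $|u|_1\in\{\lfloor n\alpha\rfloor,\lceil n\alpha\rceil\}$. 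To conclude $s\in S_\alpha$, I treat the degenerate cases $\alpha\in\{0,1\}$ trivially ($s=0^\infty\in S_0$ or $s=1^\infty\in S_1$); for $\alpha\in(0,1)$, applying the factor-weight bound to every shift $\sigma^k s$ and comparing with the $s_\alpha$-formula forces $s$ to be a \emph{mechanical word} $s_n=\lfloor(n+1)\alpha+\beta\rfloor-\lfloor n\alpha+\beta\rfloor$ for some $\beta\in\mbbr/\mbbz$ (in the rational case $\alpha=p/q$, the weight condition at length $q$ says every length-$q$ factor has exactly $p$ ones, which together with recurrence forces $s$ to be periodic with period $q$ and hence to equal a shift of $s_{p/q}$). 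Such mechanical words make up exactly $S_\alpha$.

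The main obstacle is the final identification: upgrading the combinatorial factor-weight bound into a rigid rotation coding. The prefix bound $|s_1\cdots s_n|_1\in\{\lfloor n\alpha\rfloor,\lceil n\alpha\rceil\}$ alone does not determine the offset $\beta$, and it is the simultaneous validity of the bound on prefixes of every $\sigma^k s$ (equivalently, on every factor) that rigidifies the coding. This is the step where the recurrence hypothesis enters essentially; the example $s=01^\infty$, which is balanced with density $1$ but neither recurrent nor in $S_1=\{1^\infty\}$, shows that balancedness alone does not suffice.
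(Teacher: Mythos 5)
The paper does not actually prove this lemma; it cites \cite[Theorem~2.1]{MS13}, since the statement is the classical Morse--Hedlund/Coven--Hedlund characterization of Sturmian and periodic balanced sequences. Measured against that, your sufficiency direction is complete and correct (the telescoping computation plus the observation that every element of $S_\alpha$ has its factors among those of $s_\alpha$, together with minimality resp. periodicity of $S_\alpha$). The Fekete step in the necessity direction is also sound: superadditivity of $a_n$ and subadditivity of $b_n$, combined with $b_n\le a_n+1$, does pin down a slope $\alpha$ with every length-$n$ factor having weight within $1$ of $n\alpha$. (A small caveat: when $n\alpha\in\mbbz$ the sandwich only gives $a_n\in\{n\alpha-1,n\alpha\}$ and $b_n\in\{n\alpha,n\alpha+1\}$, so the clean identities $a_n=\lfloor n\alpha\rfloor$, $b_n=\lceil n\alpha\rceil$ need a word of justification; this does not affect what follows.)

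The genuine gap is the final identification. The sentence ``applying the factor-weight bound to every shift $\sigma^k s$ and comparing with the $s_\alpha$-formula forces $s$ to be a mechanical word'' is not an argument --- it is a restatement of the hard direction of the Morse--Hedlund theorem, and you acknowledge as much by calling it ``the main obstacle'' without then overcoming it. Knowing that every length-$n$ factor has weight $\lfloor n\alpha\rfloor$ or $\lceil n\alpha\rceil$ does not by itself produce an intercept $\beta$; one must construct it (for instance as $\beta=\sup_{k}\bigl(\lceil k\alpha\rceil-|s_1\cdots s_k|_1\bigr)$, or via the standard induction on derived/return words) and then verify that the coding $s_n=\lfloor (n+1)\alpha+\beta\rfloor-\lfloor n\alpha+\beta\rfloor$ holds for all $n$, which is where balancedness is used in an essential and non-obvious way; one must also check that the resulting mechanical words (lower and upper) are exactly the elements of $S_\alpha$ as defined via the $\omega$-limit set of \eqref{eq:rotation}. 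Likewise, in the rational case $\alpha=p/q$ your reduction to a $q$-periodic word with $p$ ones per period is fine, but the claim that such a balanced periodic word must be a cyclic shift of $s_{p/q}$ is the (nontrivial) cyclic uniqueness of Christoffel words and is again asserted rather than proved. Since these identification steps are the entire content of the non-elementary direction, the proof as written is incomplete; either supply them or, as the paper does, cite the literature.
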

\begin{proof}
  See, for example, \cite[Theorem~2.1]{MS13}.
\end{proof}
\begin{lem}\label{lem:Sturmian slope}
Given $\alpha\in [0,1]$, the following hold.
\begin{itemize}
  \item [(i)] The sub-shift $S_\alpha$ is uniquely ergodic.
  \item [(ii)] $|\sum_{m\le k<m+n} i_k-n\alpha|<1$ holds for any $i_1i_2\cdots\in S_\alpha$ and any $m,n\ge 1$.
\end{itemize}
\end{lem}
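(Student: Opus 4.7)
My plan is to realize $S_\alpha$ as a symbolic coding of the circle rotation $R_\alpha\colon x\mapsto x+\alpha\pmod 1$ on $\mbbt=\mbbr/\mbbz$, and then deduce both parts from standard properties of $R_\alpha$.  A direct rewriting of~\eqref{eq:rotation} gives
\[
i_n=\lfloor n\alpha\rfloor-\lfloor(n-1)\alpha\rfloor=\mathbf{1}_{[1-\alpha,1)}\bigl(R_\alpha^{n-1}(0)\bigr),
\]
so the Borel map $\pi\colon\mbbt\to\{0,1\}^\mbbn$ sending $x$ to the sequence whose $n$-th coordinate is $\mathbf{1}_{[1-\alpha,1)}(R_\alpha^{n-1}(x))$ satisfies $\sigma\circ\pi=\pi\circ R_\alpha$ and $\pi(0)=s_\alpha$.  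The orbit closure $S_\alpha$ then essentially coincides with $\pi(\mbbt)$, differing from it only on the countable $R_\alpha$-orbit of the partition boundary $\{0,1-\alpha\}$.

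For part~(i), I would split on the rationality of $\alpha$.  If $\alpha\in\mbbq$, then $s_\alpha$ is purely periodic, $S_\alpha$ is a single periodic orbit, and unique ergodicity is trivial.  If $\alpha\notin\mbbq$, Weyl's theorem gives that $R_\alpha$ is uniquely ergodic with Lebesgue measure as its unique invariant probability.  The natural factor map $S_\alpha\to\mbbt$ inverting $\pi$ is then continuous and injective outside a countable set; hence any $\sigma$-invariant Borel probability on $S_\alpha$ pushes forward to an $R_\alpha$-invariant Borel probability on $\mbbt$, which must be Lebesgue, and since the ambiguity set is countable and carries no $R_\alpha$-invariant mass, the measure on $S_\alpha$ is uniquely determined.

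For part~(ii), I would first verify the bound on $s_\alpha$ itself.  Telescoping~\eqref{eq:rotation} gives
\[
\sum_{m\le k<m+n}i_k=\lfloor(m+n-1)\alpha\rfloor-\lfloor(m-1)\alpha\rfloor,
\]
and the elementary identity $\lfloor a+b\rfloor-\lfloor a\rfloor-b=\{a\}-\{a+b\}\in(-1,1)$, applied with $a=(m-1)\alpha$ and $b=n\alpha$, yields the strict inequality.  To extend to an arbitrary $i_1i_2\cdots\in S_\alpha$, every finite factor of this sequence, in particular $i_m\cdots i_{m+n-1}$, occurs as a factor of $s_\alpha$ (since $S_\alpha$ is the orbit closure of $s_\alpha$ and $s_\alpha$ is recurrent), so the window sum in question matches some window sum of $s_\alpha$ at a shifted starting position and inherits the same strict bound.

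The step that will require the most care is part~(i) when $\alpha$ is irrational: the map $S_\alpha\to\mbbt$ inverting $\pi$ is not immediately continuous at the two-sided limits of the partition boundary orbit, and one must verify that the countable ambiguity set truly carries no mass for any $\sigma$-invariant measure before concluding uniqueness.  Part~(ii) is then essentially a bookkeeping exercise around the floor-function identity and the orbit-closure description of $S_\alpha$.
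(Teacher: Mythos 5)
The paper does not actually prove this lemma; it simply cites \cite[Theorem~2.1]{MS13} for (i) and \cite[Corollary~1.4]{BM02} for (ii). Your proposal supplies the standard direct argument, and it is essentially correct. Part~(ii) is fully convincing as written: the identity $i_n=\mathbf{1}_{[1-\alpha,1)}(\{(n-1)\alpha\})$ and the telescoping computation give the strict bound for windows of $s_\alpha$, and since every factor of an element of the $\omega$-limit set $S_\alpha$ literally occurs as a factor of $s_\alpha$ of the same length, the integer window sum for an arbitrary $i_1i_2\cdots\in S_\alpha$ coincides with one already covered, so strictness is not lost in any limit. For part~(i), the rational case is fine ($s_\alpha$ is periodic of period $q$ when $\alpha=p/q$ in lowest terms, so $S_\alpha$ is a single periodic orbit). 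In the irrational case your argument correctly reduces unique ergodicity of $(S_\alpha,\sigma)$ to that of $R_\alpha$ plus the observation that the countable ambiguity set is Lebesgue-null, but the load-bearing step --- the existence of a continuous, equivariant, at most two-to-one surjection $S_\alpha\to\mbbt$ inverting $\pi$ off the orbit of the partition endpoints --- is asserted rather than constructed. Establishing it requires knowing that length-$n$ cylinders of $S_\alpha$ correspond to the intervals of the partition of $\mbbt$ by $\{-k\alpha:0\le k\le n\}$ and that these intervals shrink, which is exactly the content one would otherwise import from \cite{MS13} or standard Sturmian theory. So your route is self-contained in spirit and more informative than the paper's citation, at the cost of one standard but nontrivial fact left as a black box; you correctly flag it as the delicate point.
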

\begin{proof}
 For (i), see, for example, \cite[Theorem~2.1]{MS13}. For (ii), see, for example, \cite[Corollary~1.4]{BM02}.
\end{proof}

\begin{lem}\label{lem:not balanced}
  Let $x$ be a word which is not balanced. Then there exists a word $w$ such that both $0w0$ and $1w1$ are factors of $x$.
\end{lem}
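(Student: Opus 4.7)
The plan is to use a minimal-counterexample argument. Since $x$ is not balanced, there exist factors $u,v$ of $x$ with $|u|=|v|=n$ and $\bigl||u|_1-|v|_1\bigr|\ge 2$; I would choose such a pair with $n$ as small as possible, and assume $|u|_1-|v|_1\ge 2$ without loss of generality. My aim is to show that this minimal pair is already of the required form $u=1w1$, $v=0w0$ for some (possibly empty) word $w$.

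Write $u=a_1\cdots a_n$ and $v=b_1\cdots b_n$, and introduce the running discrepancy $\delta_k:=(a_1+\cdots+a_k)-(b_1+\cdots+b_k)$, so $\delta_0=0$ and $\delta_n\ge 2$. The crucial observation is that for every $0<k<n$, both the prefix pair $a_1\cdots a_k$, $b_1\cdots b_k$ (factors of $x$ of length $k<n$) and the suffix pair $a_{k+1}\cdots a_n$, $b_{k+1}\cdots b_n$ (factors of length $n-k<n$) fall under the minimality hypothesis, yielding $|\delta_k|\le 1$ and $|\delta_n-\delta_k|\le 1$ simultaneously. Combining these inequalities with $\delta_n\ge 2$ forces $\delta_n=2$ and $\delta_k=1$ for every $0<k<n$.

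Then $\delta_1=1$ forces $a_1=1,\,b_1=0$; $\delta_n-\delta_{n-1}=1$ forces $a_n=1,\,b_n=0$; and the equalities $\delta_k-\delta_{k-1}=0$ for $2\le k\le n-1$ give $a_k=b_k$ throughout the interior. Setting $w=a_2\cdots a_{n-1}$ (the empty word when $n=2$), we conclude $u=1w1$ and $v=0w0$, both factors of $x$, which is the desired conclusion. The only subtle point I would flag is the simultaneous application of minimality to prefixes \emph{and} suffixes, which is what produces the two-sided squeeze on $\delta_k$; once that is in place the rest of the argument is just bookkeeping, and no genuine case analysis is needed.
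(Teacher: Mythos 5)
Your argument is correct and complete: the two-sided squeeze on $\delta_k$ coming from applying minimality to both prefixes and suffixes is exactly the right mechanism, and the edge case $n=2$ (where $w=\epsilon$) is handled automatically. The paper does not prove this lemma itself but cites \cite[Proposition~2.1.3]{Lot02}, whose proof is essentially this same minimal-pair argument (Lothaire in fact extracts slightly more, namely that $w$ can be taken to be a palindrome, which is not needed here).
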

\begin{proof}
  See, for example, \cite[Proposition~2.1.3]{Lot02}.
\end{proof}

\subsection{Balanced pairs of matrices}\label{sse:pair}
In this subsection we summarize properties of balanced pairs of matrices that will be used later, listed in Lemma~\ref{lem:cone} and Lemma~\ref{lem:balanced pair} below. Most of them can be founded in \cite{JS90,PS21,Las25}.

Let $A=\begin{pmatrix}a & b \\ c & d \end{pmatrix}\in \mbfg$. If $a,b,c,d\ge 0$, then $A$ is called {\bf non-negative}; if $a,b,c,d>0$, then $A$ is called {\bf positive}. We shall identify the projective space  $\mbbp^1$ of $\mbbr^2$ with $\mbbr\cup\{\infty\}$, via the projective map $(x,y)^t\mapsto x/y,\, (x,y)^t\in\mbbr^2\setminus\{(0,0)^t\}$. Then $A$ naturally induces a real M\"{o}bius transformation 
\[
\underline{A}:\mbbp^1\to\mbbp^1, \quad x\mapsto \frac{ax+b}{cx+d}.
\] 
When $A\in \mbfa$, the following facts are evident.
\begin{itemize}
\item $A\in\mbfh$ iff $\underline{A}:\mbbp^1\to \mbbp^1$ has exactly two (hyperbolic) fixed points, one attracting and the other repelling, denoted by $s_A$ and $u_A$ respectively. In this case, by Perron-Frobenius theorem, the following three conditions are equivalent:
 \begin{itemize}
   \item $A$ is positive;
   \item $s_A,u_A\in\mbbr$ and $u_A<0<s_A$;
   \item $A=H_{s,u}^\lambda$ for some $\lambda>1$ and $u<0<s$. 
 \end{itemize}
Moreover, if $A=H_{s,u}^\lambda$, then $s=s_A$ and $u=u_A$.
  \item $A\in\mbfp$ iff $\underline{A}:\mbbp^1\to \mbbp^1$ has a unique (parabolic) fixed point, denoted by $p_A$. In this case, $A$ is non-negative iff one of the following holds:
\begin{itemize}
  \item either $p_A=0$ and $A=P_x$ for some $x>0$;
  \item or $p_A=\infty$ and $A=P_x^t$ for some $x>0$.
\end{itemize}
\end{itemize}

Given $\mbba=(A_0,A_1)\in\mbfg^2$, define $[\epsilon]_{\mbba}=I$, and define $[w]_{\mbba}=A_{i_1}\cdots A_{i_n}$ for each $w=i_1\cdots i_n\in\{0,1\}^*\setminus\{\epsilon\}$. Then $[\cdot]_\mbba$ defines a homomorphism from $\{0,1\}^*$ to $\mbfg$. For an open subset $U$ of $\mbbp^1$, let $\overline{U}$ denote its closure in $\mbbp^1$. The following facts are evident.

\begin{lem}\label{lem:cone}
  Let $\mbba=(A_0,A_1)\in\mbfa^2$ be one of the norm forms given in Definition~\ref{defn:balanced pair}. Define
\begin{equation}\label{eq:cone}
I_{\mbba}^+  =\left\{\begin{array}{ccl}
  (s_1,s_2) &,&  \text{case (h)} \\
  (0,s) &,&  \text{case (m)} \\
  (0,+\infty) &,&  \text{case (p)} 
\end{array}\right.,\quad
I_{\mbba}^- =\left\{\begin{array}{ccl}
  (u_2,u_1) &,&  \text{case (h)} \\
  (u,0) &,&  \text{case (m)} \\
  (-\infty,0) &,&  \text{case (p)} 
\end{array}\right..
\end{equation}
Given a non-empty finite word $x$, denote $X=[x]_{\mbba}$. Then the following hold.
\begin{itemize}
  \item [(i)] $X\in\mbfa$, $\underline{X}(I_{\mbba}^+)\subset I_{\mbba}^+$ and $\underline{X}^{-1}(I_{\mbba}^-)\subset I_{\mbba}^-$.
  \item [(ii)] $X\in\mbfp$ iff there exists $i\in\{0,1\}$ such that $x=i^{|x|}$ and $A_i\in\mbfp$. 
  \item [(iii)] If $X\in\mbfh$, then $s_X\in \overline{I_{\mbba}^+}$ and  $u_X\in \overline{I_{\mbba}^-}$.
  \item [(iv)] If $x\ne 0^{|x|},1^{|x|}$, then $\underline{X}(\overline{I_{\mbba}^+})\subset I_{\mbba}^+$ and $\underline{X}^{-1}(\overline{I_{\mbba}^-})\subset I_{\mbba}^-$. 
\end{itemize}
\end{lem}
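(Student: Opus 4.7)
The plan is a case-by-case analysis over the three normal forms (h), (m), (p), using the elementary dynamics of the M\"obius action on $\mbbp^1\cong\mbbr\cup\{\infty\}$ (viewed as an oriented circle). I would first observe that in each case the closed arc $\overline{I_{\mbba}^+}$ has, as its two endpoints in $\mbbp^1$, certain fixed points of $\underline{A_0}$ and $\underline{A_1}$: namely $s_{A_0}=s_1$ and $s_{A_1}=s_2$ in case (h); $s_{A_0}=s$ and $p_{A_1}=0$ in case (m); and $p_{A_0}=0$ and $p_{A_1}=\infty$ in case (p). Analogously for $\overline{I_{\mbba}^-}$ using $u_{A_i}$ and $p_{A_i}$. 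The core observation, which I would verify by a short calculation in each case, is that each generator $\underline{A_i}$ fixes one endpoint of $\overline{I_{\mbba}^+}$ and carries the other strictly into $I_{\mbba}^+$. For instance in case~(h), $s_2$ lies on the outer arc of $A_0$ (the component of $\mbbp^1\setminus\{s_1,u_1\}$ containing $\infty$), on which $\underline{A_0}$ is a monotone contraction toward $s_1$, so $\underline{A_0}(s_2)\in(s_1,s_2)$; in case~(m), $\underline{A_1}(s)=s/(xs+1)\in(0,s)$ while $\underline{A_0}(0)$ lies on the inner arc $(u,s)$ of $A_0$ and moves toward $s$; and in case~(p), $\underline{A_0}(\infty)=1/x$ and $\underline{A_1}(0)=y$, both in $(0,+\infty)$.

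Item~(i) is then immediate. Since $\det A_i>0$, $\underline{A_i}$ is an orientation-preserving homeomorphism of $\mbbp^1$ and in particular a strictly monotone self-homeomorphism of $\overline{I_{\mbba}^+}$; combined with the endpoint behavior, this yields $\underline{A_i}(I_{\mbba}^+)\subset I_{\mbba}^+$, and the statement for arbitrary non-empty $x$ follows by composition. The analogous argument for $\underline{A_i^{-1}}$ acting on $\overline{I_{\mbba}^-}$ is symmetric. To complete (i) I must still verify $X\in\mbfa$: each $A_i$ is non-negative in every case, so $X=[x]_{\mbba}$ is non-negative, and then $\det X=1$ together with the identity $(\tr X)^2-4=(X_{11}-X_{22})^2+4X_{12}X_{21}\ge 0$ and $\tr X\ge 0$ forces $\tr X\ge 2$. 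That $X\ne I$ is immediate when $x=i^{|x|}$ (the power $A_i^{|x|}$ has a nonzero off-diagonal or is hyperbolic) and follows from (iv) below when $x$ is mixed.

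Item~(iv) follows from the endpoint dynamics together with monotonicity. If $x$ contains both letters, consider $\underline{X}=\underline{A_{i_1}}\circ\cdots\circ\underline{A_{i_n}}$ applied to the endpoint $p$ of $\overline{I_{\mbba}^+}$ that is fixed by $\underline{A_0}$. Reading the composition from right to left, $p$ is moved strictly into $I_{\mbba}^+$ the first time $\underline{A_1}$ is applied, and all subsequent applications of $\underline{A_0}$ or $\underline{A_1}$ preserve strict interiority because each generator is a monotone self-homeomorphism of $\overline{I_{\mbba}^+}$ carrying $I_{\mbba}^+$ into $I_{\mbba}^+$. The symmetric argument for the other endpoint, using the occurrence of $0$ in $x$, gives $\underline{X}(\overline{I_{\mbba}^+})\subset I_{\mbba}^+$. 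The statement for $\overline{I_{\mbba}^-}$ follows by the same argument applied to $\underline{X^{-1}}$.

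Items~(ii) and~(iii) are then easy corollaries. For (ii): in case (h) both $A_i$ are positive, so every product $X$ is positive and hence in $\mbfh$ (a positive $\SL_2$-matrix has $\tr>2$), making the claim vacuous; in cases (m), (p), a pure power $A_i^n$ lies in $\mbfp$ iff $A_i$ does, while for mixed $x$ item~(iv) supplies an attracting fixed point of $\underline{X}$ inside $I_{\mbba}^+$, forcing $X\in\mbfh$. For (iii): $\overline{I_{\mbba}^+}$ is $\underline{X}$-invariant by (i), so for $X\in\mbfh$ iterating $\underline{X}$ on any point of $\overline{I_{\mbba}^+}$ distinct from $u_X$ gives a sequence converging to $s_X$, whence $s_X\in\overline{I_{\mbba}^+}$; likewise $u_X\in\overline{I_{\mbba}^-}$ by considering $\underline{X^{-1}}$. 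The only real obstacle I anticipate is bookkeeping — handling the three normal forms uniformly and tracking the orientation of $\mbbp^1$ — and the most delicate step is (iv), where one must be careful that strict inwardness produced at some intermediate step of the composition is preserved through the rest of the composition.
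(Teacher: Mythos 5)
The paper gives no proof of this lemma at all --- it is introduced with ``The following facts are evident'' --- so there is nothing to compare against; your argument is correct and is exactly the verification the author presumably has in mind (endpoint dynamics of $\underline{A_0},\underline{A_1}$ on the arcs $\overline{I_{\mbba}^+}$, $\overline{I_{\mbba}^-}$, composition for general words, and the identity $(\tr X)^2-4=(X_{11}-X_{22})^2+4X_{12}X_{21}$ for non-negative $X\in\SL(2,\mbbr)$). The only place I would add a sentence is in (ii): saying that (iv) supplies ``an attracting fixed point inside $I_{\mbba}^+$, forcing $X\in\mbfh$'' is slightly loose, since a parabolic fixed point also attracts every orbit; the clean way to finish is to note that (iv) applied to $\underline{X}$ on $\overline{I_{\mbba}^+}$ and to $\underline{X}^{-1}$ on $\overline{I_{\mbba}^-}$ yields fixed points in the disjoint sets $I_{\mbba}^+$ and $I_{\mbba}^-$, so $\underline{X}$ has two distinct fixed points and cannot be parabolic. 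This is a one-line repair, not a gap.
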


Lemma~\ref{lem:balanced pair}~(iv) below is a special case of \cite[Lemma~5-3]{JS90} when $\mbba$ is a co-parallel pair. To prove that this statement holds for all balanced pairs, let us introduce the notion of crossing pairs for preparation. For $\{A,B\}\subset \mbfh$, if there exists $T\in\GL(2,\mbbr)$ such that $\{T^{-1}AT,T^{-1}BT\}=\{H_{s_1,u_1}^{\lambda_1}, H_{s_2,u_2}^{\lambda_2}\}$ with $u_1<u_2<0<s_1<s_2$, then $\{A,B\}$ is called a {\bf crossing pair}. It is easy to see that this definition coincides with \cite[Definition~3.1]{Las25} for pairs of matrices in $\mbfh$.
\begin{lem}\label{lem:balanced pair}
  Let $\mbba=(A,B)\in\mbfa^2$ be a balanced pair. 
\begin{itemize}
  \item [(i)] $[w]_\mbba\in\mbfa$ holds for any non-empty finite word $w$; if $w\ne 0^{|w|},1^{|w|}$ additionally, then $[w]_\mbba\in\mbfh$. 
  \item [(ii)] For each $X\in \{A,B\}$ and each $Y\in \{AB,BA\}$, $\{X,Y\}$ is a balanced pair. 
  \item [(iii)] $\{AB,BA\}$ is a crossing pair.
  \item [(iv)] $\tr ((AB)^2) >\tr (A^2B^2)$.
\end{itemize}
\end{lem}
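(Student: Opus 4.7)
My plan is to exploit that every assertion in the lemma is invariant under simultaneous conjugation of $A, B$ by an element of $\GL(2,\mbbr)$: membership in $\mbfa, \mbfh, \mbfp$ depends only on trace and determinant, traces of words are conjugation invariant, and the projective dynamics of Lemma~\ref{lem:cone} transports compatibly. Hence I may assume $\mbba$ is in one of the three normal forms of Definition~\ref{defn:balanced pair} and argue case by case. Part (i) then reduces to Lemma~\ref{lem:cone}: part (i) there places $[w]_\mbba$ in $\mbfa$ for every non-empty finite $w$, and part (ii) identifies the elements landing in $\mbfp$ as exactly the pure powers $i^{|w|}$ with $A_i$ parabolic, so any word containing both letters lands in $\mbfh = \mbfa \setminus \mbfp$.

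For part (ii), I run through the three normal forms with $X \in \{A, B\}$ and $Y \in \{AB, BA\}$, using Lemma~\ref{lem:cone}(iv) to locate $s_Y \in I^+_\mbba$ and $u_Y \in I^-_\mbba$. In case (h), $Y$ is positive hyperbolic with fixed points strictly between those of $A$ and $B$, so $\{A, Y\}$ and $\{B, Y\}$ are nested pairs of type (h). In case (m), $\{A, Y\}$ consists of two positive hyperbolics with nested fixed points (type (h)) while $\{B, Y\} = \{P_x, H_{s_Y, u_Y}^{\lambda_Y}\}$ is directly of type (m). In case (p), $\{A, Y\}$ is of type (m); for $\{B, Y\}$ with $B = P_y^t$, I conjugate by the involution $T = \left(\begin{smallmatrix}0&1\\1&0\end{smallmatrix}\right)$ swapping $0$ and $\infty$, which sends $B$ to $P_y$ and converts $Y$ into a positive hyperbolic, again producing type (m).

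Part (iii) is the geometric heart of the lemma. The identity $(AB)(Av) = A(BAv)$ shows that $A$ sends the attracting (resp.\ repelling) eigendirection of $BA$ to that of $AB$, yielding $s_{AB} = \underline{A}(s_{BA})$ and $u_{AB} = \underline{A}(u_{BA})$, with symmetric relations $s_{BA} = \underline{B}(s_{AB})$ and $u_{BA} = \underline{B}(u_{AB})$. Since $s_{AB}, s_{BA} \in I^+_\mbba$, I exploit the monotonic dynamics of $\underline{A}, \underline{B}$ on that interval: in case (h) $\underline{A}$ contracts toward the inner attractor $s_A$ and $\underline{B}$ toward the outer attractor $s_B$, forcing $s_{AB} < s_{BA}$; the parallel argument on $I^-_\mbba$ (applied to $\underline{A^{-1}}$ and $\underline{B^{-1}}$, whose attracting fixed points on that interval are $u_A$ and $u_B$) gives $u_{AB} < u_{BA}$. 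Cases (m) and (p) are handled by the same mechanism, with a parabolic fixed point at $0$ (or at $\infty$) playing the role of the appropriate endpoint of $I^\pm_\mbba$; though the labeling of which of $AB, BA$ corresponds to $H_1$ in the crossing configuration may swap, in each case the strict orderings of the four fixed points exhibit $\{AB, BA\}$ as a crossing pair, with no further conjugation required.

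For part (iv), the algebraic pivot is the identity
\[
\tr((AB)^2) - \tr(A^2 B^2) = \tr(ABA^{-1}B^{-1}) - 2,
\]
which I derive by applying $\tr(XY) + \tr(XY^{-1}) = \tr X \tr Y$ on $\SL(2,\mbbr)$ twice to express $\tr(A^2 B^2)$ as a polynomial in $\alpha = \tr A$, $\beta = \tr B$, $\gamma = \tr(AB)$, and then comparing with the Fricke--Klein formula $\tr([A,B]) = \alpha^2 + \beta^2 + \gamma^2 - \alpha\beta\gamma - 2$. It thus suffices to show $\tr([A,B]) > 2$. In case (p) a direct multiplication gives $\tr([A,B]) = 2 + x^2 y^2$. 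In case (m), where $\beta = 2$, Fricke--Klein collapses to $(\alpha - \gamma)^2 + 2$, and reading off the $(1,2)$-entry of $A$ shows $\gamma = \alpha + x A_{12}$ with $A_{12} > 0$ for a positive hyperbolic $A$, giving the strict inequality. Case (h) is the J\o{}rgensen--Smith inequality, which I simply cite from \cite[Lemma~5-3]{JS90}. I expect part (iii) in the parabolic cases to be the most delicate step, where a careful check is needed near the degenerate fixed points at $0$ and $\infty$ to confirm that $\underline{A}, \underline{B}$ really do provide the required contractions on $I^\pm_\mbba$.
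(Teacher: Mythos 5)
Your proof is correct, and for parts (i)--(iii) it follows essentially the same route as the paper: reduce to a normal form via conjugation invariance, read (i) and (ii) off Lemma~\ref{lem:cone}, and for (iii) use the conjugacy relations $s_{BA}=\underline{B}(s_{AB})$, $u_{BA}=\underline{B}(u_{AB})$ together with the monotone dynamics of $\underline{A},\underline{B}$ on $I_\mbba^{\pm}$ (the paper streamlines the case analysis by relabeling so that the right endpoint of $I_\mbba^+$ is fixed by $\underline{B}$, whence $\underline{B}(x)>x$ on $I_\mbba^+\cup I_\mbba^-$ uniformly in all three cases, avoiding your label-swapping caveat). Where you genuinely diverge is part (iv). The paper deduces it from (iii): since $\{AB,BA\}$ is a crossing pair, the spectral inequality $\rho(AB)\rho(BA)>\rho(AB\cdot BA)$ holds (cited from Panti--Sclosa or Laskawiec), and this translates into $\tr((AB)^2)>\tr(A^2B^2)$ because trace is an increasing function of spectral radius on $\mbfa$. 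You instead prove the purely algebraic identity $\tr((AB)^2)-\tr(A^2B^2)=\tr([A,B])-2$ (which checks out: Cayley--Hamilton gives $\tr(A^2B^2)=\alpha\beta\gamma-\alpha^2-\beta^2+2$, and comparison with the Fricke--Klein formula does the rest) and then verify $\tr([A,B])>2$ directly: $2+x^2y^2$ in case (p), $(\alpha-\gamma)^2+2$ with $\gamma-\alpha=xA_{12}>0$ in case (m), and a citation of \cite[Lemma~5-3]{JS90} in case (h). Your route decouples (iv) from (iii) and makes the parabolic cases fully explicit, at the cost of a case split and an external citation for the co-parallel case; the paper's route is uniform across all three cases but leans on the crossing-pair spectral gap from the literature. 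Both are sound.
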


\begin{proof}
We may assume that $\mbba$ is in a normal form. Then assertions (i)(ii) follows from Lemma~\ref{lem:cone}. To prove (iii)(iv), denote $C=AB$ and $D=BA$ below. We may further assume that the right end point of $I_\mbba^+$ is a fixed point of $\underline{B}$, so $\underline{B}(x)>x$ holds for any $x\in I_\mbba^+\cup I_\mbba^-$.  

(iii). By Lemma~\ref{lem:cone}, $C,D\in\mbfh$, $s_C,s_D\in I_{\mbba}^+$ and
\[
\underline{D}(\underline{B}(s_C))=\underline{B}(\underline{C}(s_C))=\underline{B}(s_C)\in I_{\mbba}^+,
\]
so $s_D=\underline{B}(s_C)>s_C$. A similar argument shows that $u_D=\underline{B}(u_C)>u_C$. Therefore $\{C,D\}$ is a crossing pair.

(iv). Since $\{C,D\}$ is a crossing pair, it is known that $\rho(C)\rho(D)>\rho(CD)$. See, for example, \cite[Theorem~2.5]{PS21}, or the displayed line in the last paragraph of \cite[Proof of Corollary~5.4]{Las25}. Then it follows that 
\[
\rho((AB)^2)= \rho(C)\rho(D) > \rho(CD) = \rho(A^2B^2),
\]
which is equivalent to $\tr ((AB)^2) >\tr (A^2B^2)$.
\end{proof}

\section{A theorem of J{\o}rgensen and Smith revisited}\label{se:J-S}

In this section we aim at proving Proposition~\ref{prop:J-S}, which is essentially borrowed from J{\o}rgensen and Smith \cite[Theorem~5-4]{JS90} and its proof there. Following the argument in \cite[Proof of Lemma~5-3]{JS90}, we shall deduce it from Lemma~\ref{lem:J-S}.
\begin{prop}\label{prop:J-S}
  Suppose that $s\in \{0,1\}^\mbbn$ is recurrent and not balanced. Then there exists a factor $w_0$ of $s$ and words $w_1,w_2$ satisfying the following properties.
\begin{itemize}
  \item [(i)] $|w_0|=|w_1|=|w_2|$ and $0<|w_0|_1=|w_1|_1=|w_2|_1<|w_0|$.
  \item [(ii)] Given a balanced pair $\mbba\in\mbfa^2$, there exists $\xi>1$ such that for any finite word $z$, we have:
 \begin{equation}\label{eq:trace larger}
  \max\{\tr[w_1 z]_{\mbba}, \tr [w_2 z]_{\mbba}\} \ge \xi\cdot \tr[w_0 z]_{\mbba}.
\end{equation}
\end{itemize}
\end{prop}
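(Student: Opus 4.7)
My plan is to combine Lemma~\ref{lem:not balanced} and the recurrence of $s$ to locate the factor $w_0$, then reduce the uniform trace inequality (ii) to the forthcoming Lemma~\ref{lem:J-S}.

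By Lemma~\ref{lem:not balanced}, there is a (possibly empty) word $v$ such that both $0v0$ and $1v1$ are factors of $s$; by recurrence of $s$ each appears infinitely often, so choosing an occurrence of $0v0$ followed later by a non-overlapping occurrence of $1v1$ yields a factor of $s$ of the form $w_0 := 0v0\,u\,1v1$ for some finite word $u$ (the symmetric shape $1v1\,u\,0v0$ is handled identically). I then set
\[
w_1 := 0v1\,u\,0v1, \qquad w_2 := 1v0\,u\,1v0.
\]
Writing $A:=A_0$, $B:=A_1$, $V:=[v]_\mbba$, $U:=[u]_\mbba$, $Z:=[z]_\mbba$, these choices give
\[
[w_0\,z]_\mbba = (AVA)U(BVB)Z,\quad [w_1\,z]_\mbba = (AVB)U(AVB)Z,\quad [w_2\,z]_\mbba = (BVA)U(BVA)Z.
\]
A direct count shows $|w_0|=|w_1|=|w_2|=2|v|+|u|+4$ and $|w_0|_1=|w_1|_1=|w_2|_1=2|v|_1+|u|_1+2$, strictly between $0$ and the common length, so (i) holds.

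With this setup, property (ii) reads
\[
\max\bigl\{\tr\bigl((AVB)U(AVB)Z\bigr),\,\tr\bigl((BVA)U(BVA)Z\bigr)\bigr\} \ge \xi\,\tr\bigl((AVA)U(BVB)Z\bigr)
\]
uniformly in $Z$ running over all finite products of $A$ and $B$. This is exactly the statement I expect Lemma~\ref{lem:J-S} to deliver: a robust trace-level analogue of Lemma~\ref{lem:balanced pair}(iv), whose bare form $\tr((AB)^2)>\tr(A^2B^2)$ is the degenerate case $v=u=\epsilon$. Granted that lemma, the proposition follows by a direct substitution.

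The main obstacle, and what Lemma~\ref{lem:J-S} must address, is the uniformity of $\xi>1$ in $Z$: each of the three traces grows proportionally to the norm of $Z$, so a purely additive gap is hopeless. Following the Jørgensen--Smith scheme I would exploit Lemma~\ref{lem:cone}: the invariant cones $I_\mbba^+, I_\mbba^-$ force all three products $[w_i]Z$ to share a common dominant expanding direction in $I_\mbba^+$, so each $\tr([w_i]Z)$ factors asymptotically as a leading coefficient (depending only on $[w_i]$ and the cone data) times a common $Z$-dependent scalar, modulo an exponentially smaller correction. The strict inequality among the three leading coefficients—a refinement of Lemma~\ref{lem:balanced pair}(iv) in the spirit of the crossing-pair comparisons used in its proof—then yields a gap $\xi>1$ that is independent of $Z$.
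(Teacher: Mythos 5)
Your part (i) and the initial use of Lemma~\ref{lem:not balanced} plus recurrence are fine, but the reduction of part (ii) to Lemma~\ref{lem:J-S} does not work: the words you construct do not have the shape that lemma actually handles. Lemma~\ref{lem:J-S} compares $W_0=XYX$, $W_1=\tX Y\tX$, $W_2=X\tY X$ with $X=UV$, $\tX=VU$, $Y=V^q(UV)^mU^p$; i.e.\ it requires $w_0$ to be of the very particular form $uv^{q+1}(uv)^mu^{p+1}v$ over a two-letter ``alphabet'' $\{u,v\}$ of factors (its minimal instance is the length-six word $011001$, not $0011$). Your $w_0=0v0\,u\,1v1$, $w_1=0v1\,u\,0v1$, $w_2=1v0\,u\,1v0$ are not of this form: the outer blocks $[0v0]_\mbba$ and $[1v1]_\mbba$ of $[w_0]_\mbba$ are different matrices, so $[w_0]_\mbba$ is not $XYX$, and the identities \eqref{eq:W1}--\eqref{eq:W2} --- which rest on the mirror identity $\tr Y=\tr\tY$, the Chebyshev commutator formula, and Cayley--Hamilton applied to $XYX$ --- do not apply. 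To make a factor of the required shape available inside $s$, the paper first re-codes $s$ over a derived alphabet $\{u,v\}$ in which both $uu$ and $vv$ occur (Lemma~\ref{lem:tree}, an inductive sharpening of Lemma~\ref{lem:not balanced}); this step is entirely absent from your argument, and it is exactly what lets one locate $w_0=uv^{q+1}(uv)^mu^{p+1}v$ in $s$ by recurrence.

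The closing heuristic does not fill the gap. Asserting that each $\tr([w_i]_\mbba Z)$ factors as a leading coefficient times a common $Z$-dependent scalar reduces (ii) to a strict, uniform inequality between Perron-type coefficients of $[w_1]_\mbba,[w_2]_\mbba$ versus $[w_0]_\mbba$ along every limiting pair of cone directions; that is precisely the hard content, and neither Lemma~\ref{lem:balanced pair}~(iv) (a single trace inequality) nor Lemma~\ref{lem:cone} supplies it for your words. In fact, for your choice the first-order comparison can degenerate: already in the case $v=u=\epsilon$ one computes $[w_1]_\mbba-[w_0]_\mbba=(AB)^2-A^2B^2=A^2+B^2-\tr(AB^{-1})\cdot AB$, whose sign structure depends on $\tr(AB^{-1})$, which can be large and positive for a balanced pair (e.g.\ $A=H^{\lambda}_{1,-1}$, $B=P_x$ with $x$ small and $\lambda$ large); a uniform $\xi>1$ for these words would therefore need a separate argument that you have not supplied. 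By contrast, the paper's identities give $W_1-W_0=\eta\tX+t_1(\tX-X)$ and $W_2-W_0=\eta X+t_2(X-\tX)$ with $\eta,t_1,t_2>0$, so for every non-negative $Z$ at least one of $\tr(W_iZ)-\tr(W_0Z)$, $i=1,2$, is at least $\eta\min\{\tr(XZ),\tr(\tX Z)\}\ge\eta\delta\cdot\tr(W_0Z)$; this is where the uniform constant $\xi=1+\eta\delta$ comes from.
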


Theorem~\ref{thm:J-S} below generalizes \cite[Theorem~5-4]{JS90} slightly to balanced pairs of matrices. It can be proved by following the original argument of J{\o}rgensen and Smith \cite{JS90} directly.  Since it will be used to prove Lemma~\ref{lem:concave parabolic}, for completeness we provide a proof of it in \S~\ref{se:J-S proof}. For a non-empty finite word $x$, let $x^\infty=xxx\cdots$ denote the periodic infinite word generated by $x$.

\begin{thm}\label{thm:J-S}
Let $\mbba\in\mbfa^2$ be a balanced pair. Let $l,n$ be integers with $n\ge 1$ and $0\le l\le n$, and let $\mclx_{l,n}:=\{x\in \{0,1\}^n : |x|_1=l\}$. Then for $x\in \mclx_{l,n}$, we have: $\tr [x]_{\mbba}\ge \tr [x']_{\mbba}, \forall x'\in \mclx_{l,n}$ iff $x^\infty$ is balanced.
\end{thm}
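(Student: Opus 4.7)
The strategy is to apply Proposition~\ref{prop:J-S} to $s=x^\infty$ and exploit the cyclic invariance $\tr(UV)=\tr(VU)$.

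\emph{Sufficiency.} Suppose $x^\infty$ is balanced. By Lemma~\ref{lem:recurrent and balanced} and Lemma~\ref{lem:Sturmian slope}(i), $x^\infty$ lies in the shift orbit of $s_{l/n}$, so the length-$n$ prefixes of all such sequences are precisely the cyclic rotations of $x$ inside $\mclx_{l,n}$. Since cyclic rotation preserves trace, every such rotation shares a single trace value $\tau^*$. Combined with the necessity direction below---which forces every trace-maximizer to have a balanced periodic extension---this $\tau^*$ coincides with $\max_{y\in\mclx_{l,n}}\tr[y]_\mbba$.

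\emph{Necessity.} Suppose, for contradiction, $x$ is trace-maximizing in $\mclx_{l,n}$ but $x^\infty$ is not balanced; the cases $l\in\{0,n\}$ yield a constant $x^\infty$ and are excluded, so $0<l<n$. Apply Proposition~\ref{prop:J-S} to the recurrent, unbalanced word $x^\infty$: we obtain a factor $w_0$ of $x^\infty$, words $w_1,w_2$ with $|w_1|=|w_2|=|w_0|$ and $|w_1|_1=|w_2|_1=|w_0|_1\in(0,|w_0|)$, and $\xi>1$ with
\[
\max\{\tr[w_1 z]_\mbba,\tr[w_2 z]_\mbba\}\ge \xi\cdot\tr[w_0 z]_\mbba\qquad\text{for every finite word }z.
\]
Since $w_0$ is a factor of the $n$-periodic word $x^\infty$, some cyclic rotation $\tx$ of $x$ and integer $K\ge \lceil|w_0|/n\rceil$ allow us to write $\tx^K=w_0 v$, with $|v|=nK-|w_0|$ and $|v|_1=lK-|w_0|_1$. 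Setting $z=v$ in the inequality and using $\tr[\tx^K]_\mbba=\tr[x^K]_\mbba$ gives some $i\in\{1,2\}$ with $\tr[w_iv]_\mbba\ge \xi\tr[x^K]_\mbba$, where $w_iv\in\mclx_{lK,nK}$.

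To convert this into a contradiction in $\mclx_{l,n}$, we need Proposition~\ref{prop:J-S} to deliver $|w_0|\le n$. The proposition is built on the witness structure ``$0w0$ and $1w1$ are factors of $s$'' supplied by Lemma~\ref{lem:not balanced}; for a period-$n$ sequence, any imbalance already manifests at some factor-length $k\le n-1$ (factors of length $n$ are cyclic rotations of one word, all carrying $l$ ones), and a standard combinatorial reduction produces such a witness with $|w|\le n-3$, whence $|w_0|\le n$. Granting this, one may take $K=1$, so $w_iv\in\mclx_{l,n}$ satisfies $\tr[w_iv]_\mbba\ge \xi\tr[x]_\mbba>\tr[x]_\mbba$, contradicting the maximality of $x$.

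The main obstacle is precisely the bound $|w_0|\le n$: without it, the constant gap $\xi$ is lost upon taking $(nK)$-th roots and no direct length-$n$ comparison is available. Controlling $|w_0|$ requires examining the combinatorial construction of witnesses for unbalanced periodic words and tracking how that control propagates through the proof of Proposition~\ref{prop:J-S}.
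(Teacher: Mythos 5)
There is a genuine gap in your necessity direction, at exactly the point you flag as "the main obstacle": the claimed bound $|w_0|\le n$ is false, and the "standard combinatorial reduction" you invoke does not exist. Even if the witness $w$ from Lemma~\ref{lem:not balanced} is short, the word $w_0=uv^{q+1}(uv)^mu^{p+1}v$ produced in the proof of Proposition~\ref{prop:J-S} must contain both a $uu$-block and a $vv$-block joined through a $(uv)^m$-bridge, and its length is at least $3(|u|+|v|)$, which need not be $\le n$. A concrete counterexample is $x=0011$ ($n=4$, $l=2$): here $u=0$, $v=1$, and the shortest admissible $w_0$ is $011001$, of length $6>4$. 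Without $|w_0|\le n$ your argument only yields a strict comparison inside $\mclx_{lK,nK}$ for some $K\ge 2$, and, as you yourself observe, the multiplicative gap $\xi$ does not transfer back to $\mclx_{l,n}$; the finite trace-maximization statement cannot be recovered by a limiting argument the way the Lyapunov statement can.

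The paper resolves this differently. Instead of applying Proposition~\ref{prop:J-S} to $x^\infty$ as a black box, it proves a cyclic variant of the tree lemma (Lemma~\ref{lem:cyclic}): the inductive recoding over the alphabet $\{u,v\}$ is carried out so that some rotation $\tx$ of $x$ (same length $n$) is itself a word over $\{u,v\}$ with $uu,vv$ factors of $\tx^\infty$. It then splits into cases according to how a length-$n$ factor $y$ of $\tx^\infty$ decomposes: either $y=w_0z$ with $w_0$ of the required shape fitting entirely inside one period (so the $w_1,w_2$ substitution stays in $\mclx_{l,n}$ and $\xi>1$ gives strict improvement), or $y=u^{p+1}v^{q+1}$ --- precisely the case (e.g.\ $x=0011$) where no such $w_0$ fits in one period --- which is handled by a separate direct computation: $x'=u^pv^quv$ satisfies $\tr[x']_\mbba-\tr[x]_\mbba=\Gamma_p(U)\Gamma_q(V)\,\tr((UV)^2-U^2V^2)>0$ by Lemma~\ref{lem:Chebyshev}~(ii) and Lemma~\ref{lem:balanced pair}~(iv). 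Your sufficiency reduction (balanced words of fixed length and weight are cyclic rotations of one another, hence share a trace, hence necessity implies sufficiency) is correct and matches what the paper implicitly uses; the missing piece is the cyclic control of the witness, which requires reworking Lemma~\ref{lem:tree} rather than bounding its output.
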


\subsection{A key lemma}

\begin{lem}\label{lem:J-S}
  Let $(U,V)\in\mbfa^2$ be a balanced pair. Given integers $p,q\ge 1$ and $m\ge 0$, denote 
\[
X=UV, \quad \tX=VU, \quad Y=V^q (UV)^m U^p, \quad \tY=U^p(VU)^mV^q,
\]
and 
\[
W_0= XYX, \quad W_1= \tX Y \tX, \quad W_2= X \tY X.
\]
Then $\eta:=\tr[X (\tY-Y)]>0$, and there exist $t_1,t_2>0$ such that 
\begin{equation}\label{eq:W1}
  W_1 - W_0 = \eta\tX  + t_1(\tX -X),
\end{equation}
\begin{equation}\label{eq:W2}
  W_2 - W_0 = \eta X + t_2(X-\tX).
\end{equation}
\end{lem}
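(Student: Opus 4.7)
My plan is to compute $W_1-W_0$ and $W_2-W_0$ using a Cayley--Hamilton-based matrix identity. Since $U,V\in\mbfa\subset\SL(2,\mbbr)$, both $X=UV$ and $\tX=VU$ have determinant $1$; by Lemma~\ref{lem:balanced pair}(i) they lie in $\mbfh$ and therefore share a common trace $\tau>2$. The key identity I will use is
\[
XNX = N + \tr(XN)\,X - \tr(N)\,I, \qquad N \in M_2(\mbbr),
\]
obtained by applying Cayley--Hamilton to $XN$ (using $\det X = 1$) and the $2\times 2$ adjugate formula $N^{-1} = (\tr(N) I - N)/\det N$ on the dense open set of invertible $N$; the same identity holds with $\tX$ in place of $X$. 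I will also need the Chebyshev-type sequences $a_k, \alpha_p, \gamma_q$ coming from the expansions $X^k = a_k X - a_{k-1} I$, $U^p = \alpha_p U - \alpha_{p-1} I$, $V^q = \gamma_q V - \gamma_{q-1} I$, where $a_0=0$, $a_1=1$, $a_{k+1}=\tau a_k-a_{k-1}$ (analogously for $\alpha,\gamma$). The condition $\tau>2$ forces $a_{k+1}>a_k>0$ for all $k\ge 1$, and $\alpha_p\gamma_q\ge\alpha_{p-1}\gamma_{q-1}\ge 0$ by analogous monotonicity for $\tau_U,\tau_V\ge 2$.

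To unravel $Y$ and $\tY$, I first use $V(UV)^m=(VU)^mV$ to rewrite $Y = V^{q-1}\tX^{m+1}U^{p-1}$ and $\tY = U^{p-1}X^{m+1}V^{q-1}$, and then apply Cayley--Hamilton to obtain
\[
Y = a_{m+1}V^qU^p - a_m V^{q-1}U^{p-1}, \qquad \tY = a_{m+1}U^pV^q - a_m U^{p-1}V^{q-1}.
\]
Expanding $U^pV^q$ and $V^qU^p$ via the Cayley--Hamilton forms of $U^p,V^q$, the mixed $U$-only and $V$-only terms cancel and one gets $U^pV^q - V^qU^p = \alpha_p\gamma_q(X-\tX)$. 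Hence
\[
\tY - Y = \kappa\,(X-\tX), \qquad \kappa := a_{m+1}\alpha_p\gamma_q - a_m\alpha_{p-1}\gamma_{q-1} > 0,
\]
and in particular $\tr(\tY-Y)=0$. Applying the matrix identity with $N=\tY-Y$ gives $W_2-W_0 = X(\tY-Y)X = \kappa(X-\tX) + \eta X$, which is \eqref{eq:W2} with $t_2=\kappa>0$. For \eqref{eq:W1}, applying the identity with $N=Y$ to both $X$ and $\tX$ yields
\[
W_1 - W_0 = \tr(\tX Y)\,\tX - \tr(XY)\,X = \tr(XY)(\tX-X) + \bigl(\tr(\tX Y)-\tr(XY)\bigr)\tX.
\]

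Finally, $t_1 = \tr(XY) \ge 2 > 0$ because $XY \in \mbfa$ by Lemma~\ref{lem:balanced pair}(i); the positivity $\eta>0$ follows from $\eta = \kappa(\tr(X^2)-\tr(X\tX))$ together with $\tr((UV)^2)>\tr(U^2V^2)=\tr(X\tX)$, which is Lemma~\ref{lem:balanced pair}(iv) applied to $(U,V)$. The remaining point is to identify $\tr(\tX Y)-\tr(XY) = \eta$, i.e., $\tr(\tX Y)=\tr(X\tY)$. Substituting the expansions of $Y,\tY$ from above, this reduces to the combinatorial identity $\tr(UVU^pV^q) = \tr(VUV^qU^p)$ for all $p,q\ge 0$; expanding $U^p,V^q$ via Cayley--Hamilton once more shows that both sides equal the same linear combination of $\tr((UV)^2),\tr(U^2V),\tr(UV^2),\tr(UV)$ after applying the cyclic-trace equalities $\tr((VU)^2)=\tr((UV)^2)$, $\tr(VUV)=\tr(UV^2)$, $\tr(VU^2)=\tr(U^2V)$, $\tr(VU)=\tr(UV)$. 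I expect the verification of $\tr(\tX Y)=\tr(X\tY)$ to be the only step requiring careful bookkeeping; everything else is a mechanical consequence of Cayley--Hamilton together with the structural facts recorded in Lemma~\ref{lem:balanced pair}.
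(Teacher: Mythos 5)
Your proof is correct and follows essentially the same route as the paper's: the same Chebyshev-type expansion yielding $\tY - Y = \kappa\,(X-\tX)$, the same Cayley--Hamilton reduction of $W_0,W_1,W_2$, and the same identification of $\eta$, $t_1=\tr(XY)$ and $t_2=\kappa$. The only difference is that where the paper invokes Southcott's mirror identity (Lemma~\ref{lem:mirror}) to obtain $\tr(\tX Y)=\tr(X\tY)$ (and $\tr Y=\tr\tY$), you verify the required special case $\tr(UVU^pV^q)=\tr(VUV^qU^p)$ by a direct Cayley--Hamilton expansion plus cyclicity of the trace, which checks out.
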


To prove Lemma~\ref{lem:J-S}, let us recall a few facts of $2\times 2$ matrices listed below. 

\begin{lem}\label{lem:Cayley-Hamilton}
Let $X,Y\in \SL(2,\mbbr)$. Then the following hold.
\begin{itemize}
  \item [(i)] $X+X^{-1}=(\tr X)\cdot I$.
  \item [(ii)] $XYX=\tr(XY)\cdot X- Y^{-1}$.
\end{itemize}
\end{lem}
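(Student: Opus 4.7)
Both parts are direct consequences of the Cayley--Hamilton theorem applied to $2\times 2$ matrices of determinant one, so the plan is short.

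For (i), I would start from the characteristic polynomial: any $X\in \mbfg$ satisfies $X^2-(\tr X)X+(\det X)I=0$. Specializing to $X\in\SL(2,\mbbr)$ gives $X^2-(\tr X)X+I=0$. Since $X$ is invertible, multiplying both sides on the right by $X^{-1}$ yields $X-(\tr X)I+X^{-1}=0$, which rearranges to the claimed identity $X+X^{-1}=(\tr X)\cdot I$.

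For (ii), the key observation is that $XY\in \SL(2,\mbbr)$ whenever $X,Y\in \SL(2,\mbbr)$, so part (i) applies to the product $XY$. Doing so gives
\[
XY+(XY)^{-1}=\tr(XY)\cdot I,\qquad\text{i.e.,}\qquad XY+Y^{-1}X^{-1}=\tr(XY)\cdot I.
\]
Now I would multiply both sides on the right by $X$ and use $X^{-1}X=I$ to obtain
\[
XYX+Y^{-1}=\tr(XY)\cdot X,
\]
which is exactly (ii) after rearrangement.

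There is essentially no obstacle; the only thing to be careful about is the order of multiplication in the non-commutative setting (in particular that $(XY)^{-1}=Y^{-1}X^{-1}$, not $X^{-1}Y^{-1}$), and keeping track of which side to multiply by $X$ so that the $X^{-1}$ factor cancels rather than ending up on the wrong side. Everything else is formal manipulation.
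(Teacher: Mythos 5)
Your proof is correct and follows the same route as the paper, which simply notes that the conclusion follows by applying the Cayley--Hamilton theorem to $X$ and to $XY$ respectively; your right-multiplication by $X^{-1}$ (resp.\ $X$) is exactly the implicit step the paper leaves to the reader.
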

\begin{proof}
  Applying Cayley-Hamilton Theorem to $X$ and $XY$ respectively, the conclusion follows.  
\end{proof}

For $X\in \mbfh$ similar to $\begin{pmatrix} \lambda &  \\ & \lambda^{-1}\end{pmatrix}$, denote $\Gamma_k(X)=\frac{\lambda^k-\lambda^{-k}}{\lambda-\lambda^{-1}}$ for each $k\ge 0$. 
\begin{lem}\label{lem:Chebyshev}
  Let $X,Y\in \mbfh$. Then the following hold.
\begin{itemize}
  \item [(i)] $X^k =\Gamma_k(X)\cdot X -\Gamma_{k-1}(X)\cdot I, \,\forall k\ge 1$.
  \item [(ii)] $X^pY^q-Y^qX^p=\Gamma_p(X)\Gamma_q(Y)\cdot (XY-YX), \,\forall p,q\ge 0$.
\end{itemize}
\end{lem}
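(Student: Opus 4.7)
The plan is to prove (i) by a simple induction on $k$, reducing matters to the Cayley--Hamilton identity $X^2 = (\tr X)\, X - I$ (valid since $\det X = 1$) together with the elementary three-term recurrence satisfied by the coefficients $\Gamma_k$. Part (ii) then follows from (i) by multiplying out and canceling.

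For (i), I first note that because $X \in \mbfh$ is similar to $\mathrm{diag}(\lambda,\lambda^{-1})$, we have $\tr X = \lambda + \lambda^{-1}$, so the explicit formula $\Gamma_k(X) = (\lambda^k - \lambda^{-k})/(\lambda - \lambda^{-1})$ immediately yields
\[
\Gamma_{k+1}(X) = (\tr X)\,\Gamma_k(X) - \Gamma_{k-1}(X),
\]
with initial values $\Gamma_0(X) = 0$ and $\Gamma_1(X) = 1$. The formula $X^k = \Gamma_k(X)\, X - \Gamma_{k-1}(X)\, I$ holds trivially for $k=1$ and reduces to Cayley--Hamilton for $k=2$. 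Assuming the formula for some $k \ge 2$, I would multiply both sides by $X$ on the left, then use $X^2 = (\tr X)\, X - I$ to replace $X^2$, and collect coefficients of $X$ and $I$; the recurrence above then identifies the new coefficients as $\Gamma_{k+1}(X)$ and $\Gamma_k(X)$, closing the induction.

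For (ii), the cases $p = 0$ or $q = 0$ are trivial since both sides vanish. For $p, q \ge 1$, I would substitute (i) to write $X^p = \Gamma_p(X)\, X - \Gamma_{p-1}(X)\, I$ and $Y^q = \Gamma_q(Y)\, Y - \Gamma_{q-1}(Y)\, I$, then expand the products $X^p Y^q$ and $Y^q X^p$. All four of the terms proportional to $X$ alone, $Y$ alone, and $I$ appear identically in both expansions (the scalar coefficients commute), so they cancel in the difference, leaving only
\[
X^p Y^q - Y^q X^p = \Gamma_p(X)\,\Gamma_q(Y)\, (XY - YX),
\]
which is the claim.

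The whole argument is routine algebra and I do not anticipate a substantive obstacle. The only point requiring mild care is verifying the recurrence for $\Gamma_k$, which is a direct manipulation of the geometric-series expression; everything else is induction and expansion.
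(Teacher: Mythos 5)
Your proof is correct and is exactly the direct verification the paper alludes to: the paper proves (i) by citing \cite{JS90} or noting it can "be verified directly" (your Cayley--Hamilton induction with the three-term recurrence for $\Gamma_k$), and proves (ii) by substituting (i) and cancelling, just as you do. No substantive difference.
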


\begin{proof}
  (i). It can either be found in \cite{JS90} as Equation (2.4) or be verified directly. 
  
(ii). It follows directly from (i). See also \cite[Lemma~5-2]{JS90}. 
\end{proof}

The following result is due to Southcott \cite[Theorem~6.3]{Sou79}; see also \cite[Proposition~3.4]{BL24}.
\begin{lem}\label{lem:mirror}
  Let $w=i_1i_2\cdots i_{n-1}i_n\in \{0,1\}^n$ and let $\tilde{w}=i_ni_{n-1}\cdots i_2i_1$. Then $\tr [w]_{\mbba}=\tr [\tilde{w}]_{\mbba}$ for any $\mbba\in \mbfg^2$.
\end{lem}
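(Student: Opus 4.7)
The plan is to prove $\tr[w]_{\mbba} = \tr[\tilde w]_{\mbba}$ by strong induction on the length $n = |w|$. The cases $n \le 2$ are immediate: either $w$ is a palindrome, or it is a two-letter word whose reverse is a cyclic rotation of it. For $n \ge 3$, I would split into two cases according to whether $w$ contains two adjacent equal letters.

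Suppose first that $i_k = i_{k+1}$ for some $1 \le k \le n-1$; set $A := A_{i_k}$ and apply Cayley--Hamilton, $A^2 = \tr(A)\, A - \det(A)\, I$, in the middle of the product defining $[w]_{\mbba}$. This yields
\[
[w]_{\mbba} = \tr(A)\, [w_1]_{\mbba} - \det(A)\, [w_2]_{\mbba},
\]
where $w_1$ is obtained from $w$ by deleting a single letter at position $k$ (so $|w_1| = n-1$) and $w_2$ by deleting both letters at positions $k$ and $k+1$ (so $|w_2| = n-2$). The same consecutive repeat sits in $\tilde w$ at positions $n-k, n-k+1$, and the analogous expansion applied to $\tilde w$ gives
\[
[\tilde w]_{\mbba} = \tr(A)\, [\widetilde{w_1}]_{\mbba} - \det(A)\, [\widetilde{w_2}]_{\mbba},
\]
after checking that the shorter words produced are indeed the reverses of $w_1, w_2$. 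Taking traces and invoking the inductive hypothesis on $w_1, w_2$ completes this case.

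In the remaining case, $w$ has no consecutive equal letters, so $w$ is alternating. If $n$ is odd, such a $w$ starts and ends with the same letter and is in fact a palindrome, so $w = \tilde w$ and the identity is trivial. If $n$ is even, then $[w]_{\mbba} = (A_{i_1} A_{i_2})^{n/2}$ while $[\tilde w]_{\mbba} = (A_{i_2} A_{i_1})^{n/2}$; these two matrices are cyclic conjugates of each other, so their traces coincide.

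The only real obstacle is the positional bookkeeping in the first case: verifying that deleting at positions $n-k$ (and/or $n-k+1$) in $\tilde w$ produces exactly the reverses of the words $w_1, w_2$ obtained from $w$. This is a routine index-chasing check, and once handled the induction carries through unimpeded. An alternative route would be to first reduce to $\SL(2,\mbbr)$ by rescaling $A_i \mapsto A_i/\sqrt{\det A_i}$ (the resulting prefactor depends only on the letter counts $|w|_0,|w|_1$, which are preserved under reversal) and then invoke the Fricke presentation expressing any $\tr[w]$ as a polynomial in the three reversal-invariant quantities $\tr(A_0), \tr(A_1), \tr(A_0 A_1)$; but the direct induction seems cleaner.
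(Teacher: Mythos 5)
Your induction is correct and complete. The paper itself does not prove Lemma~\ref{lem:mirror}: it simply cites Southcott's theorem (and \cite{BL24}), so your argument is a genuinely self-contained alternative. The key step---using Cayley--Hamilton to write $A^2=\tr(A)A-\det(A)I$ at an adjacent repeated letter, thereby reducing $\tr[w]_\mbba$ to traces of strictly shorter words whose reversals are exactly $\widetilde{w_1},\widetilde{w_2}$---works exactly as you describe, and the index bookkeeping does check out (the repeat $i_k=i_{k+1}$ reappears in $\tilde w$ at positions $n-k,n-k+1$, and deleting there yields the reversals of $w_1,w_2$). The residual alternating case is handled correctly by the palindrome/cyclic-permutation dichotomy. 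This is essentially the classical proof of the ``mirror'' trace identity, and it is arguably preferable to a bare citation since the paper already has Cayley--Hamilton on hand (Lemma~\ref{lem:Cayley-Hamilton}). One small caveat on your parenthetical alternative route: knowing that $\tr[w]$ is a Fricke polynomial in $\tr A_0,\tr A_1,\tr(A_0A_1)$ does not by itself give reversal invariance---you would still need to identify the polynomials for $w$ and $\tilde w$, e.g.\ via the transpose anti-automorphism $X\mapsto X^t$ together with $\tr(A_0^tA_1^t)=\tr(A_1A_0)=\tr(A_0A_1)$. Since that route is offered only as an aside and your main induction stands on its own, this is not a gap.
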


\begin{proof}[Proof of Lemma~\ref{lem:J-S}]
Denote $\alpha_k=\Gamma_k(U),\beta_k=\Gamma_k(V),\gamma_k=\Gamma_k(X)=\Gamma_k(\tX)$ for each $k\ge 0$. We first claim that

\begin{equation}\label{eq:Y diff}
  \tY - Y =\delta (X-\tX), \quad\text{where}\quad \delta:= \gamma_{m+1}\alpha_p\beta_q  - \gamma_m\alpha_{p-1}\beta_{q-1}>0.
\end{equation}
To prove \eqref{eq:Y diff}, first note that by Lemma~\ref{lem:Chebyshev}~(i), $\tX^m=\gamma_{m+1}\cdot I -\gamma_{m}\cdot\tX^{-1}$, and hence
\[
\tY=U^p(\gamma_{m+1}\cdot I -\gamma_{m}\cdot\tX^{-1})V^q = \gamma_{m+1} \cdot U^pV^q - \gamma_m\cdot U^{p-1}V^{q-1}.
\]
Similarly, we have:
\[
Y = \gamma_{m+1}\cdot V^qU^p - \gamma_m\cdot V^{q-1}U^{p-1}.
\]
Also note that by Lemma~\ref{lem:balanced pair}~(i), $X,\tX\in\mbfh$. Then it follows that
\[
\tY - Y = \gamma_{m+1} (U^pV^q- V^qU^p) - \gamma_m(U^{p-1}V^{q-1}-V^{q-1}U^{p-1}) =\delta(X-\tX),
\]
where in the latter equality we have used Lemma~\ref{lem:Chebyshev}~(ii). To see $\delta>0$, we only need to note that $\Gamma_0(\cdot)=0$ and  $\Gamma_k(\cdot)$ is strictly increasing in $k$. 

Due to \eqref{eq:Y diff}, we have:
\[
\eta=\tr[X (\tY-Y)] =\delta\cdot \tr (X^2 -X\tX).
\]
Since $(U,V)\in\mbfa^2$ is a balanced pair, by Lemma~\ref{lem:balanced pair}~(iv), it follows that $\eta>0$.

To proceed, note that by Lemma~\ref{lem:Cayley-Hamilton}~(ii), we have:
\[
W_0=\tr(XY) \cdot X - Y^{-1}, \quad W_1  = \tr(\tX Y)\cdot \tX - Y^{-1}, \quad W_2 = \tr(X\tY)\cdot X - \tY^{-1}.
\]
Also note that by Lemma~\ref{lem:mirror},
\[
\tr Y =\tr \tY, \quad \tr (X\tY) = \tr(Y\tX) = \tr (\tX Y).
\]
It follows that 
\[
W_1 - W_0 = (\tr(X \tY) - \tr(XY) )\cdot \tX + \tr(XY)\cdot (\tX -X).
\]
That is to say, \eqref{eq:W1} holds for $t_1:=\tr(XY)$; moreover, by Lemma~\ref{lem:balanced pair}~(i), $t_1>2$. On the other hand, due to Lemma~\ref{lem:Cayley-Hamilton}~(i) and $\tr Y =\tr \tY$, we have:
\[
Y^{-1} - \tY^{-1} = (\tr Y \cdot I - Y) - (\tr \tY \cdot I - \tY) = \tY - Y.
\]
It follows that 
\[
W_2 - W_0 =  (\tr(X \tY) - \tr(XY) )\cdot X  + (\tY - Y) = \eta X + \delta(X-\tX),
\]
which completes the proof.
\end{proof}

\subsection{Proof of Proposition~\ref{prop:J-S}}

In this subsection we shall deduce Proposition~\ref{prop:J-S} from Lemma~\ref{lem:J-S}, with the help of Lemma~\ref{lem:tree} below. 

Let $s\in\{a,b\}^\mbbn$ be an infinite word over the alphabet $\{a,b\}$. We shall adopt the following convention: if there exist distinct factors $u,v$ of $s$ such that $s$ can be written as $s=w_1w_2\cdots$ for $w_1,w_2,\cdots\in \{u,v\}$, then we say that $s$ is an infinite word over the alphabet $\{u,v\}$. For finite words we adopt a similar convention.

\begin{lem}\label{lem:tree}
  Suppose that $s\in\{0,1\}^\mbbn$ is not balanced. Then there exist distinct factors $u,v$ of $s$ such that the following hold:
\begin{itemize}
  \item [(i)] $s$ is an infinite word over the alphabet $\{u,v\}$;
  \item [(ii)] $uu,vv$ are factors of $s$;
  \item [(iii)] $([u]_\mbba,[v]_\mbba)\in\mbfa^2$ is a balanced pair provided that $\mbba\in\mbfa^2$ is a balanced pair. 
\end{itemize}
\end{lem}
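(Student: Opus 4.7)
The plan is to perform a case analysis on the occurrence of the length-two factors $00$ and $11$ in $s$. As a preliminary observation, I would note that at least one of these two factors must appear in $s$: otherwise $s \in \{(01)^\infty, (10)^\infty\}$ and hence $s$ is balanced, contradicting the hypothesis. Because the involution swapping the letters $0 \leftrightarrow 1$ sends a balanced pair $(A_0,A_1)$ to the balanced pair $(A_1,A_0)$, it suffices to treat two cases: (C1) both $00$ and $11$ are factors of $s$, and (C2) $00$ is a factor of $s$ but $11$ is not. In case (C2) I would further note that $s$ must contain at least two occurrences of the letter $1$, because both $s=0^\infty$ and $s=0^k 1 0^\infty$ (for $k\ge 0$) are balanced.

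In case (C1) the choice $(u,v)=(0,1)$ does everything for free: (i) is immediate, (ii) is the hypothesis of (C1), and (iii) is the hypothesis that $\mbba$ is balanced. For case (C2), the absence of $11$ lets me write $s = 0^{a_0}\, 1\, 0^{a_1}\, 1\, 0^{a_2} \cdots$, where each \emph{internal} exponent $a_i$ (the length of the $0$-run between two consecutive occurrences of $1$) is a positive integer. Letting $m$ be the minimum of these internal exponents, I would take $u=0$ and $v=10^m$. Part (i) is then witnessed by the factorization $s = u^{a_0} v u^{a_1-m} v u^{a_2-m}\cdots$, all of whose exponents are non-negative by the choice of $m$. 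For part (ii), $uu=00$ is a factor by (C2), and to obtain $vv = 10^m 1 0^m$ as a factor I would pick any internal index $i$ with $a_i=m$ and use that the following internal gap $a_{i+1}$ (or, if $s$ has only finitely many $1$'s, the infinite $0$-tail) supplies at least $m$ further zeros. For (iii), $([u]_\mbba,[v]_\mbba) = (A_0, A_1 A_0^m)$, and this pair is balanced by iterating Lemma~\ref{lem:balanced pair}(ii): starting from the balanced pair $(A_0, A_1 A_0^k)$ and taking $X=A_0$, $Y=(A_1 A_0^k)\cdot A_0 = A_1 A_0^{k+1}$ in that lemma produces the balanced pair $(A_0, A_1 A_0^{k+1})$, so a straightforward induction from $k=0$ to $k=m$ gives the conclusion.

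The main subtlety I anticipate is in establishing (ii) for $vv$ in the edge case where $s$ has only finitely many occurrences of $1$: there the minimum $m$ might be realised only by the last internal gap, in which case the second $0^m$ block inside $vv$ must be drawn from the infinite $0$-tail of $s$ rather than from another internal gap. Once this case is noted, everything else is routine checking, and the three conclusions follow as sketched.
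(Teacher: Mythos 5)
Your approach (a one-step case analysis on the factors $00$ and $11$) is genuinely different from the paper's, which iterates an alphabet-refinement procedure seeded by Lemma~\ref{lem:not balanced}, and your case (C1) and your verification of (iii) via iterating Lemma~\ref{lem:balanced pair}~(ii) are fine. But case (C2) has a real gap in conclusion (ii). In view of the convention stated just before the lemma and of how the lemma is used in the proof of Proposition~\ref{prop:J-S}, the occurrences of $uu$ and $vv$ must be \emph{aligned} with the block decomposition of $s$ over $\{u,v\}$: one needs two consecutive blocks equal to $u$ and two consecutive blocks equal to $v$ in the factorization $s=w_1w_2\cdots$, since Proposition~\ref{prop:J-S} extracts from $s$ a factor of the form $uv^{q+1}(uv)^mu^{p+1}v$ with $p,q\ge 1$. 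This aligned property is exactly what the paper's induction terminates on. Your choice $u=0$, $v=10^m$ only guarantees that $00$ occurs somewhere in $s$ as a $\{0,1\}$-word; that occurrence can straddle the boundary between a $v$-block (whose last letter is $0$) and a following $u$-block, in which case the decomposition $s=u^{a_0}vu^{a_1-m}vu^{a_2-m}\cdots$ never contains two consecutive $u$'s.

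This failure actually occurs. Take $s=(101010010010)^\infty$: it has no factor $11$, it is not balanced (it contains $10101$ and $00100$), its internal $0$-runs all have length $1$ or $2$, so $m=1$ and your construction gives $u=0$, $v=10$. The decomposition of $s$ over $\{0,10\}$ is $(vvv\,u\,v\,u\,v)^\infty$, in which every $u$-block is isolated; the factor $00$ of $s$ always straddles a $v$ and a $u$. Consequently no factor of the form $uv^{q+1}(uv)^mu^{p+1}v$ with $p\ge 1$ exists for this choice (it would force a run $0^{p+2}\supset 000$ in $s$), so Proposition~\ref{prop:J-S} cannot be run with your $u,v$. The paper's proof handles this by continuing the refinement — here it would pass from $\{0,10\}$ to $\{10,010\}$, over which $s=(aaacc)^\infty$ does have both squares aligned — and the witness $w$ from Lemma~\ref{lem:not balanced}, carried along the induction, is what guarantees termination with both squares present. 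To repair your argument you would need to iterate your construction (or bound the run-lengths from above as well as below), at which point you essentially recover the paper's induction.
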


\begin{proof}
The proof is based on the following self-evident observation with an inductive argument.
\begin{clm}
  Let $t$ be an infinite word over the alphabet $\{a,b\}$. Suppose that:
  \begin{itemize}
    \item $bb$ is not a factor of $t$;
    \item there exists a word $w$ such that both $awa$ and $bwb$ are factors of $t$. 
  \end{itemize}
Then we have: 
\begin{itemize}
  \item  $t$ is an infinite word over the alphabet $\{a,ba\}$;
  \item  $w=aw'$, $w'$ is a word over $\{a,ba\}$, and $aw'a,baw'ba$ are factors of $t$. 
\end{itemize}
\end{clm}
Now let us prove the lemma. Since $s$ is not balanced, by Lemma~\ref{lem:not balanced}, there exists a word $w_0$ such that for $u_0=0,v_0=1$, both $u_0w_0u_0$ and $v_0w_0v_0$ are factors of $s$. Now suppose that for some $k\ge 0$, words $u_k,v_k,w_k$ have been defined to satisfy the following induction hypothesis:
\begin{itemize}
  \item $u_k$ and $v_k$ are distinct factors of both $s$ and $w_k$;
  \item $s$ and $w_k$ are words over $\{u_k,v_k\}$;
  \item $u_kw_ku_k$ and $v_kw_kv_k$ are factors of $s$. 
\end{itemize}
Considering $s$ as an infinite word over $\{u_k,v_k\}$, if $v_kv_k$ is not a factor of $s$, then take $a=u_k,b=v_k$; otherwise, if $u_ku_k$ is not a factor of $s$, then take $a=v_k,b=u_k$. In both cases, we can apply the Claim to $t=s,w=w_k$ (as words over $\{a,b\}$), so that for $u_{k+1}=a,v_{k+1}=ba,w_{k+1}=w'$, $u_{k+1},v_{k+1},w_{k+1}$ still satisfy the induction hypothesis above. Since in this process the length of $w_k$ (as a word over $\{0,1\}$) is strictly decreasing in $k$, the induction must be terminated at some step $k\ge 0$. Taking $u=u_k,v=v_k$ for such $k$, assertions (i) and (ii) in the lemma are satisfied; assertion (iii) follows from the construction of $u,v$ and Lemma~\ref{lem:balanced pair}~(ii).
\end{proof}


\begin{proof}[Proof of Proposition~\ref{prop:J-S}]
Let $u,v$ be factors of $s$ given by Lemma~\ref{lem:tree}. Since $s$ is a recurrent infinite word over $\{u,v\}$ and $uu,vv$ are factors of $s$, there exist $p,q\ge 1$ and $m\ge 0$ such that $w_0=uv^{q+1}(uv)^m u^{p+1}v$ is a factor of $s$; moreover, since $s$ is not balanced, both $0$ and $1$ are factors of $w_0$. Take $w_1=vuv^q(uv)^m u^pvu$ and $w_2=uvu^p(vu)^m v^quv$. Then assertion (i) holds. 

To prove (ii), denote $W_i=[w_i]_\mbba$ for $i=0,1,2$ and $U=[u]_\mbba,V=[v]_\mbba$, $X=UV,\tX=VU$. Then by Lemma~\ref{lem:tree}~(iii), $(U,V)\in\mbfa^2$ is a balanced pair, so both \eqref{eq:W1} and \eqref{eq:W2} hold for $\eta,t_1,t_2$ given by Lemma~\ref{lem:J-S}. 

To proceed, we may further assume that $\mbba$ is in a normal form. Then both $X$ and $\tX$ are positive. As a result, there exists $\delta>0$ such that both $X-\delta W_0$ and $\tX-\delta W_0$ are non-negative. Now let us show that \eqref{eq:trace larger} holds for $\xi=1+\eta\delta$. Denote $Z=[z]_{\mbba}$, which is also non-negative. If $\tr (\tX Z)\ge \tr (XZ)$, then by \eqref{eq:W1}, 
\[
\tr (W_1Z) \ge \tr (W_0Z) + \eta\cdot \tr (\tX Z);
\]
combining this with $\tr((\tX-\delta W_0)Z)\ge 0$ yields that $\tr (W_1Z)\ge \xi\cdot\tr (W_0Z)$. Otherwise, a similar argument shows that $\tr (W_2Z)\ge \xi\cdot\tr (W_0Z)$.
\end{proof}

\section{Proof of the main theorem}\label{se:main}

This section is devoted to the proof of Theorem~\ref{thm:main}. We shall fix a (sub-multiplicative) matrix norm $\|\cdot\|$ in this section. In view of Lemma~\ref{lem:Vieta}, we choose to use the {\bf Hilbert-Schmidt norm}; namely, for a $2\times 2$ real matrix $X=\begin{pmatrix} a & b \\ c & d\end{pmatrix}$, $\|X\|$ is defined by 
\[
\|X\|^2=\tr(X^tX)=a^2+b^2+c^2+d^2.
\]
Note that $\tr X\le \sqrt{2}\cdot\|X\|$ holds for any $X$. We shall use this fact occasionally without further explanation.

To deduce Theorem~\ref{thm:main} from Proposition~\ref{prop:J-S}, the starting point of our argument consists of some well-known facts about joint spectral radius summarized in Lemma~\ref{lem:extremal}. The proofs of these facts are usually based on the existence of an {\bf extremal norm}, which is a fundamental tool for the study of joint spectral radius. We refer to \cite{Jun09,BG19} and references therein for more information on extremal norms. In the following let $\supp\mu$ denote the support of a measure $\mu$. 
\begin{lem}\label{lem:extremal}
  Let $(A,B)\in\mbfg^2$ be a balanced pair. Then there exists $C_\mbba>1$ such that the following hold. 
\begin{itemize}
  \item [(i)] $\|[w]_\mbba\|\le C_\mbba\cdot\hrho(\mbba)^{|w|}$ holds for any finite word $w$.
  \item [(ii)] Let $\mu$ be a maximizing measure of $\mbba$. Then for any $s\in\supp\mu$ and any factor $w$ of $s$, $\|[w]_\mbba\|\ge C_\mbba^{-1}\cdot\hrho(\mbba)^{|w|}$. As a result, any invariant measure supported in $\supp\mu$ is also a maximizing measure of $\mbba$.
\end{itemize}
\end{lem}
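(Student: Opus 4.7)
The approach is to combine the existence of an extremal operator norm for $\mbba$ with a subadditive-ergodic / Fekete argument that forces saturation of the norm bound along orbits in $\supp\mu$.

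For~(i), every balanced pair is irreducible: in the normal form (h) the four eigendirections are pairwise distinct, in (m) the two eigenlines of $H_{s,u}^{\lambda}$ avoid the axis $\langle e_2\rangle$ fixed by $P_x$, and in (p) the generators $P_x$ and $P_y^t$ fix the two distinct coordinate axes. Standard theory (see \cite{Jun09,BG19}) then produces a norm $\|\cdot\|_*$ on $\mbbr^2$ whose operator norm satisfies $\|X\|_{*,\mathrm{op}}\le\hrho(\mbba)$ for each $X\in\{A,B\}$. Sub-multiplicativity gives $\|[w]_\mbba\|_{*,\mathrm{op}}\le\hrho(\mbba)^{|w|}$ for every finite word $w$, and equivalence with the Hilbert-Schmidt norm on $M_2(\mbbr)$ yields~(i).

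For~(ii), fix a maximizing $\mu$ and let $h_n(s):=\log\|[s_1\cdots s_n]_\mbba\|_{*,\mathrm{op}}$. Then $h_n\le n\log\hrho(\mbba)$ pointwise and $h_{n+m}\le h_n+h_m\circ\sigma^n$, so $a_n:=\int h_n\,d\mu$ is subadditive. The essential point is $a_n/n\to\log\hrho(\mbba)$; given this, Fekete's inequality forces $a_n\ge n\log\hrho(\mbba)$, which combined with the upper bound gives $a_n=n\log\hrho(\mbba)$ exactly, hence $h_n\equiv n\log\hrho(\mbba)$ $\mu$-almost everywhere. Because $h_n$ depends only on the first $n$ coordinates it is continuous and takes finitely many values, so the level set $\{h_n=n\log\hrho(\mbba)\}$ is clopen and must contain $\supp\mu$. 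For any factor $w$ of $s\in\supp\mu$, $\sigma$-invariance of $\supp\mu$ makes $w$ the initial word of some $\sigma^k s\in\supp\mu$, giving $\|[w]_\mbba\|_{*,\mathrm{op}}=\hrho(\mbba)^{|w|}$; norm equivalence yields~(ii). The same analysis applied to the reverse cocycle $s\mapsto A_{s_n}\cdots A_{s_1}$ produces the matching lower bound on $\|A_{s_n}\cdots A_{s_1}\|$ for $s\in\supp\mu$; inserting this into~\eqref{eq:LyaExp} for an invariant $\nu$ with $\supp\nu\subseteq\supp\mu$ forces $\chi(\mbba,\nu)\ge\log\hrho(\mbba)$, proving the ``as a result'' clause.

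The main obstacle is the identity $a_n/n\to\log\hrho(\mbba)$---equivalently, that the forward Lyapunov exponent along $[s_1\cdots s_n]_\mbba$ coincides with $\chi(\mbba,\mu)$, which~\eqref{eq:LyaExp} computes along the reverse products. I plan to extract this from Lemma~\ref{lem:mirror}: after normalizing $\mbba\in\mbfa^2$ and passing to an ergodic component of $\mu$, the Lyapunov spectrum of the reverse cocycle is $\{\pm\log\hrho(\mbba)\}$ and the top exponent is simple, so Oseledec's theorem gives $\frac{1}{n}\log\rho(A_{s_n}\cdots A_{s_1})\to\log\hrho(\mbba)$ $\mu$-a.e.; the identity $\rho([s_1\cdots s_n]_\mbba)=\rho(A_{s_n}\cdots A_{s_1})$ of Lemma~\ref{lem:mirror} then transfers the growth rate to forward products, and the opposite bound from the extremal norm closes the loop.
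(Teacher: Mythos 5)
Your argument for (i) is the same as the paper's (irreducibility of a balanced pair plus the existence of an extremal norm from \cite{Jun09}), and your architecture for (ii) --- Fekete saturation of the subadditive sequence $a_n$, the level set $\{h_n=n\log\hrho(\mbba)\}$ being clopen and of full measure hence containing $\supp\mu$, and shift-invariance of $\supp\mu$ to pass from initial words to arbitrary factors --- is a correct and genuinely self-contained route where the paper simply cites \cite[Theorem~2.3~(iv)]{Mor13}. You have also correctly isolated the real difficulty: \eqref{eq:LyaExp} is computed along reverse products $A_{i_n}\cdots A_{i_1}$ while $[w]_\mbba$ is the forward product, so you need the forward exponent of $\mu$ to equal $\log\hrho(\mbba)$.

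The gap is in your plan for closing that identity. Oseledec's theorem controls $\frac1n\log\|M_n(s)v\|$ and $\frac1n\log\|M_n(s)\|$, but not directly $\frac1n\log\rho(M_n(s))$: the matrix $M_n(s)=A_{s_n}\cdots A_{s_1}$ maps the fibre over $s$ to the fibre over $\sigma^n s$, and its eigenvalues are comparable to its norm only when the unstable directions at $s$ and at $\sigma^n s$ are transverse to each other in a quantified way. In general $\rho(M_n)$ can be far smaller than $\|M_n\|$ along subsequences, and establishing $\limsup_n\frac1n\log\rho(M_n(s))=\log\hrho(\mbba)$ a.e.\ requires an additional Poincar\'e-recurrence argument (return to a set where the Oseledec splitting has angle bounded below); this is essentially the content of the very result of \cite{Mor13} that the paper invokes, so as written you have not avoided that input, only relabelled it. A repair entirely within the paper's toolkit: for a factor $w$ of $s\in\supp\mu$ with reversal $\check w$, Lemma~\ref{lem:mirror} gives $\tr[w]_\mbba=\tr[\check w]_\mbba$ and the determinants agree, while Lemma~\ref{lem:trace-norm} (or Lemma~\ref{lem:trace-norm parabolic}, after trimming constant prefixes and suffixes of bounded length) gives $\tr[x]_\mbba\ge\delta\|[x]_\mbba\|$; combined with $\tr X\le\sqrt2\,\|X\|$ this yields $\|[w]_\mbba\|\asymp\|[\check w]_\mbba\|$ with a uniform constant, so the forward and reverse cocycles have comparable norms word by word and hence identical exponents, with no multiplicative ergodic theory needed. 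Your treatment of the ``as a result'' clause is fine as stated, since there the Fekete argument can be run directly on the reverse cocycle, for which $a_n/n\to\chi(\mbba,\mu)=\log\hrho(\mbba)$ is immediate from \eqref{eq:LyaExp}.
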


\begin{proof}
A balanced pair is {\bf irreducible} in the sense of \cite[Page~16]{Jun09}, and hence (i) follows from \cite[Theorem~2.1]{Jun09}. (ii) follows from (i) and \cite[Theorem~2.3~(iv)]{Mor13}; see also \cite[Proposition~6.2]{BG19}.
\end{proof}

Besides Proposition~\ref{prop:J-S} and Lemma~\ref{lem:extremal}, another building block for our proof of Theorem~\ref{thm:main} will be given in Lemma~\ref{lem:concave parabolic}, which generalizes a result of Laskawiec \cite{Las25} (see Lemma~\ref{lem:concave}) slightly. Admitting Lemma~\ref{lem:concave parabolic}, the main idea of our argument is quite simple: if the conclusion of the theorem fails, then we can find a maximizing measure $\mu$ of $\mbba$ and a recurrent $s\in\supp\mu$ which is not balanced. Then applying Proposition~\ref{prop:J-S} to this $s$, for every $n\ge 1$ we can find a factor $t$ of $s$ of the form \eqref{eq:non-balance n} and a corresponding word $t'$ such that \eqref{eq:contradiction norm} holds, which will contradict Lemma~\ref{lem:extremal} and hence complete the proof. To make the logic in the detailed proof more transparent, we shall first prove Theorem~\ref{thm:main} for a co-parallel pair $\mbba\in\mbfh^2$ in \S~\ref{sse:co-para}, and then complete the more technical proof for a general balanced pair in \S~\ref{sse:general}.

\subsection{The co-parallel case}\label{sse:co-para} In this subsection we shall prove Theorem~\ref{thm:main} for a co-parallel pair $\mbba\in\mbfh^2$. To apply Proposition~\ref{prop:J-S}, we need Lemma~\ref{lem:trace-norm} below to control the norm of a matrix in $\mbfh$ by its trace. Recall that for $X=\begin{pmatrix} a & b \\ c & d\end{pmatrix}\in \mbfh$, $s_X$ and $u_X$ denote the attracting and repelling fixed points of $\underline{X}$ respectively.

\begin{lem}\label{lem:Vieta}
  Let $X=\begin{pmatrix} a & b \\ c & d\end{pmatrix}\in \mbfh$ be positive. Then we have:
\begin{itemize}
  \item [(i)] $s_Xu_X=-b/c$;
  \item [(ii)] if $\theta \le b/c\le \theta^{-1}$ for some $\theta\in(0,1)$, then $\tr X\ge \sqrt{\theta}\cdot\|X\|$.
\end{itemize}

\end{lem}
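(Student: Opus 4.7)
The plan is straightforward in both parts, using only that $X\in\SL(2,\mbbr)$ (so $ad-bc=1$) and that $a,b,c,d>0$.

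For (i), I would just apply Vieta's formulas. The fixed points of $\underline{X}(x)=(ax+b)/(cx+d)$ are the solutions of $\underline{X}(x)=x$, i.e.\ the roots of the quadratic $cx^{2}+(d-a)x-b=0$. Since $X\in\mbfh$ means $\tr X>2$, this quadratic has two distinct real roots $s_X$ and $u_X$, whose product is $-b/c$ by Vieta.

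For (ii), I would use the $\SL(2)$ identity $ad=1+bc$ to rewrite the square of the trace as
\[
(\tr X)^{2}=(a+d)^{2}=a^{2}+d^{2}+2ad=a^{2}+d^{2}+2+2bc.
\]
The key observation is that the hypothesis $\theta\le b/c\le\theta^{-1}$ forces $c\ge\theta b$ and $b\ge\theta c$, so $\theta b^{2}\le bc$ and $\theta c^{2}\le bc$, which gives $\theta(b^{2}+c^{2})\le 2bc$. Plugging this into the previous display,
\[
(\tr X)^{2}\ge a^{2}+d^{2}+2+\theta(b^{2}+c^{2})\ge\theta(a^{2}+b^{2}+c^{2}+d^{2})+(1-\theta)(a^{2}+d^{2})+2\ge\theta\,\|X\|^{2},
\]
and taking square roots finishes (ii).

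There isn't really a serious obstacle here; both statements reduce to a one-line algebraic manipulation once one recognizes that the determinant condition $ad-bc=1$ is exactly what converts the cross term $2ad$ in $(a+d)^{2}$ into the $2bc$ that can be compared against $b^{2}+c^{2}$ via the AM--GM-like inequality supplied by $\theta\le b/c\le\theta^{-1}$. The only small care needed is in (i) to note that $c\ne 0$ (guaranteed by positivity of $X$), so the quadratic is genuinely quadratic and Vieta applies.
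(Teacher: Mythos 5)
Your proof is correct and follows essentially the same route as the paper: part (i) is Vieta applied to the fixed-point quadratic $cx^2+(d-a)x-b=0$, and part (ii) uses $ad=1+bc$ to write $(\tr X)^2=a^2+d^2+2+2bc$ and then the bound $2bc\ge\theta(b^2+c^2)$ coming from $\theta\le b/c\le\theta^{-1}$. No discrepancies to report.
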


\begin{proof}
(i). It follows from Vieta's Formula. 

(ii). Since $(\tr X)^2-2=a^2+2bc+d^2$ and since $2bc\ge \theta(b^2+c^2)$, the conclusion follows.
\end{proof}

\begin{lem}\label{lem:trace-norm}
Let $\mbba\in\mbfh^2$ be a co-parallel pair. Then there exists $\delta_\mbba>0$ such that $\tr [x]_\mbba \ge \delta_\mbba\cdot\|[x]_\mbba\|$ for any finite word $x$.
\end{lem}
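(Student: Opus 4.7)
The plan is to reduce to the normal form of Definition~\ref{defn:balanced pair}~(h) and then apply Lemma~\ref{lem:Vieta}. Since $\mbba=(A,B)\in\mbfh^2$ is co-parallel, fix $T\in\GL(2,\mbbr)$ such that $\mbba':=(T^{-1}AT,T^{-1}BT)$ equals $(H_{s_1,u_1}^{\lambda_1},H_{s_2,u_2}^{\lambda_2})$ (relabeling if needed) with $u_2<u_1<0<s_1<s_2$. For any finite word $x$, set $X:=[x]_{\mbba}$ and $\tX:=[x]_{\mbba'}$. Because $[\cdot]_{(\cdot)}$ is a homomorphism, $X=T\tX T^{-1}$, so
\[
\tr X=\tr\tX,\qquad \|X\|\le\|T\|\cdot\|T^{-1}\|\cdot\|\tX\|.
\]

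The heart of the argument is to show that $\tX$ is positive and that its off-diagonal ratio stays in a compact subinterval of $(0,+\infty)$ independent of $x$. For non-empty $x$, Lemma~\ref{lem:balanced pair}~(i) gives $\tX\in\mbfa$, and in fact $\tX\in\mbfh$: in the exceptional cases $x=0^{|x|}$ or $x=1^{|x|}$ this is immediate since $\tX$ is a positive integer power of a matrix in $\mbfh$. Lemma~\ref{lem:cone}~(iii), used with the closed intervals $\overline{I_{\mbba'}^+}=[s_1,s_2]$ and $\overline{I_{\mbba'}^-}=[u_2,u_1]$, then gives $s_{\tX}\in[s_1,s_2]$ and $u_{\tX}\in[u_2,u_1]$; in particular $u_{\tX}<0<s_{\tX}$, so by the Perron--Frobenius characterization recalled before Lemma~\ref{lem:balanced pair}, $\tX$ is a positive matrix. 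Writing $\tX=\begin{pmatrix}a&b\\c&d\end{pmatrix}$, Lemma~\ref{lem:Vieta}~(i) yields
\[
\frac{b}{c}=-s_{\tX}u_{\tX}\in[-s_1u_1,\,-s_2u_2]\subset(0,+\infty),
\]
an interval determined solely by $\mbba'$.

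Choose $\theta\in(0,1)$ small enough that $[-s_1u_1,\,-s_2u_2]\subset[\theta,\theta^{-1}]$. Applying Lemma~\ref{lem:Vieta}~(ii) to $\tX$ and combining with the conjugation bounds above,
\[
\tr X=\tr\tX\ge\sqrt{\theta}\cdot\|\tX\|\ge\frac{\sqrt{\theta}}{\|T\|\cdot\|T^{-1}\|}\cdot\|X\|.
\]
The empty word is handled trivially since $\tr I=2>\sqrt{2}=\|I\|$ in the Hilbert--Schmidt norm. Setting $\delta_{\mbba}:=\min\bigl\{1,\,\sqrt{\theta}/(\|T\|\cdot\|T^{-1}\|)\bigr\}$ therefore works.

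No step is genuinely difficult; the only subtlety worth flagging is remembering the closed (rather than open) containment in Lemma~\ref{lem:cone}~(iii), which is exactly what is needed to accommodate the degenerate words $x=0^n$ and $x=1^n$ and thereby obtain a uniform positive lower bound on $b/c$ as required by Lemma~\ref{lem:Vieta}~(ii).
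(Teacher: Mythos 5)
Your proof is correct and follows essentially the same route as the paper's: reduce to the normal form of case (h), use Lemma~\ref{lem:cone} to trap $s_X$ and $u_X$ in $[s_1,s_2]$ and $[u_2,u_1]$, and conclude via Lemma~\ref{lem:Vieta}. You simply make explicit the details the paper leaves implicit (the conjugation-invariance of the trace, the bounded distortion of the norm under conjugation, and the empty-word case).
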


\begin{proof}
Denote $X=[x]_{\mbba}=\begin{pmatrix} a & b \\ c & d\end{pmatrix}$. We may assume that $x\ne\epsilon$ and $\mbba$ is in a normal form, i.e., case (h) in Definition~\ref{defn:balanced pair}. Then by Lemma~\ref{lem:cone}, $X\in\mbfh$, $s_X\in [s_1,s_2]$ and $u_X\in [u_2,u_1]$, so by Lemma~\ref{lem:Vieta}~(i), $-s_1u_1\le b/c\le -s_2u_2$. Thus the conclusion follows from Lemma~\ref{lem:Vieta}~(ii).

\end{proof}

The following statement was proved in \cite[Proof of Theorem~9.8]{Las25}; see also \cite[Proposition~3.4]{MS13} for a closely related statement. Actually Laskawiec \cite[Theorem~9.8]{Las25} also proved the existence of a Sturmian maximizing measure for every co-parallel pair, but in our argument we only need the uniqueness part. Recall that $\mu_\alpha$ denotes the Sturmian measure of slope $\alpha$.

\begin{lem}\label{lem:concave} 
Let $\mbba\in\mbfh^2$ be a co-parallel pair. Then the function $\alpha\mapsto \chi(\mbba,\mu_\alpha)$ is continuous and strictly concave on $[0,1]$. Therefore, there exists at most one $\alpha\in[0,1]$ such that $\mu_\alpha$ is a maximizing measure of $\mbba$. 
\end{lem}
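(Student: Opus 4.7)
The statement asserts continuity and strict concavity of $\alpha\mapsto\chi(\mbba,\mu_\alpha)$ on $[0,1]$; the uniqueness conclusion is then immediate. My plan is to handle the two pieces separately, using cone contraction for continuity and a mediant-Jensen computation for strict concavity.

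\textbf{Continuity.} For a co-parallel pair $\mbba\in\mbfh^2$, Lemma~\ref{lem:cone}(iv) together with Lemma~\ref{lem:balanced pair}(i) yield that every non-trivial word sends $\overline{I_\mbba^+}\subset\mbbp^1$ strictly into $I_\mbba^+$, so the cocycle $x\mapsto A_{i_1}$ (where $x=i_1i_2\cdots$) is strictly cone-contracting. Standard Hilbert-metric contraction arguments then produce a continuous unit-vector field $v:\{0,1\}^\mbbn\to\mbbr^2$ with $A_{i_1}v(x)=\lambda(x)v(\sigma x)$ for a positive continuous scalar $\lambda(x)$. This gives the coboundary identity
\[
\chi(\mbba,\mu)=\int\log\lambda(x)\dif\mu(x)=\int\log\|A_{i_1}v(x)\|\dif\mu(x),
\]
expressing $\chi(\mbba,\cdot)$ as the integral of a bounded continuous function, hence weak-$\ast$ continuous in $\mu$. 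Since $\alpha\mapsto\mu_\alpha$ is weak-$\ast$ continuous on $[0,1]$ (cf.~\cite[Theorem~2.1]{MS13}), continuity of $\alpha\mapsto\chi(\mbba,\mu_\alpha)$ follows.

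\textbf{Strict concavity via Farey mediants.} For $\alpha=p/q$ in lowest terms, $\chi(\mbba,\mu_\alpha)=\tfrac{1}{q}\log\rho([w_\alpha]_\mbba)$, where $w_\alpha$ is the balanced length-$q$ periodic word with $|w_\alpha|_1=p$. The classical Christoffel/Stern-Brocot combinatorics (cf.~\cite[Chapter~2]{Lot02}) give that for Farey-adjacent rationals $\alpha_1=p_1/q_1<\alpha_3=p_3/q_3$ with mediant $\alpha_2=(p_1+p_3)/(q_1+q_3)$, the word $w_{\alpha_2}=w_{\alpha_1}w_{\alpha_3}$ up to a cyclic rotation that is irrelevant for $\rho$. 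The strict mediant-Jensen inequality
\[
\chi(\mbba,\mu_{\alpha_2})>\tfrac{q_1}{q_1+q_3}\chi(\mbba,\mu_{\alpha_1})+\tfrac{q_3}{q_1+q_3}\chi(\mbba,\mu_{\alpha_3})
\]
is therefore equivalent to the matrix inequality $\rho(UV)>\rho(U)\rho(V)$ with $U:=[w_{\alpha_1}]_\mbba$ and $V:=[w_{\alpha_3}]_\mbba$. Iterating Lemma~\ref{lem:balanced pair} along the Stern-Brocot tree shows that $(U,V)$ is itself a co-parallel pair, with nested fixed points $u_V<u_U<0<s_U<s_V$ after a common conjugation. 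Diagonalizing $U=P_UD_UP_U^{-1}$ and $V=P_VD_VP_V^{-1}$ and expanding $\tr(UV)=\tr(D_U M D_V M^{-1})$ with $M=P_U^{-1}P_V$ having entries $a,b,c,d$ yields the explicit identity
\[
\tr(UV)-\bigl(\rho(U)\rho(V)+\rho(U)^{-1}\rho(V)^{-1}\bigr)=\frac{bc\,(\rho(U)^2-1)(\rho(V)^2-1)}{(ad-bc)\,\rho(U)\rho(V)}.
\]
The co-parallel nesting forces $ad-bc=(s_V-u_V)/(s_U-u_U)>0$ and $bc=(u_V-u_U)(s_U-s_V)/(s_U-u_U)^2>0$ (a product of two negatives), so the right-hand side is strictly positive; by strict monotonicity of $\rho\mapsto\rho+\rho^{-1}$ on $(1,\infty)$ this gives $\rho(UV)>\rho(U)\rho(V)$ as desired. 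Strict Jensen at the dense set of Farey mediants, combined with continuity, upgrades to strict concavity on all of $[0,1]$: any sub-interval on which the concave continuous function $\chi(\mbba,\mu_\cdot)$ were affine would contain a Farey mediant triple contradicting the strict inequality above.

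\textbf{Main obstacle.} The decisive ingredient is the sign analysis of $bc$: it is precisely the co-parallel (rather than crossing) orientation that makes $(u_V-u_U)(s_U-s_V)$ a product of two negatives. In the crossing case the sign flips and the inequality reverses, so $\chi(\mbba,\mu_\alpha)$ is genuinely not concave there; this is exactly the geometric asymmetry that the hypothesis exploits. Verifying that the nesting $u_V<u_U<0<s_U<s_V$ is preserved at every step of the Stern-Brocot induction (so that every mediant inherits the correct sign) is the core geometric input, after which the algebraic inequality is self-contained.
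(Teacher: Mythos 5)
Your overall architecture (continuity via cone contraction plus a Jensen-type inequality at rational slopes) matches the route the paper takes, which is to cite Laskawiec's proof of \cite[Theorem~9.8]{Las25} and to reproduce its skeleton when proving the generalization in Lemma~\ref{lem:concave parabolic}. Your explicit computation of $\rho(UV)>\rho(U)\rho(V)$ for a co-parallel pair via the sign of $bc$ in $M=P_U^{-1}P_V$ is correct and is a nice self-contained substitute for the citation to \cite{PS21,Las25}. However, there is a genuine gap in how you pass from that inequality to concavity. You only establish the strict Jensen inequality at \emph{Farey-mediant} configurations, i.e.\ at triples $\alpha_1<\frac{p_1+p_3}{q_1+q_3}<\alpha_3$ with $p_3q_1-p_1q_3=1$. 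These configurations are purely local: for any $\delta>0$ there are only finitely many Farey-adjacent pairs with $\alpha_3-\alpha_1=1/(q_1q_3)>\delta$. Continuity plus strict Jensen at these configurations does \emph{not} imply concavity. For instance, $g(\alpha)=\alpha(1-\alpha)+t\,|\alpha-\tfrac12|$ with $t\in(0.1,0.5)$ is continuous, satisfies the strict mediant inequality for \emph{every} Farey-adjacent pair (the only such pair strictly straddling $\tfrac12$ is $(0/1,1/1)$, whose mediant inequality reads $\tfrac14>\tfrac t2$; all other configurations lie entirely in $[0,\tfrac12]$ or $[\tfrac12,1]$, where $g$ is strictly concave), yet $g$ fails Jensen at the rational triple $(\tfrac25,\tfrac12,\tfrac35)$ and is not concave. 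Your closing sentence ("any sub-interval on which the concave continuous function were affine\dots") presupposes concavity and only addresses strictness, so it does not close this gap.

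The missing ingredient is exactly what the paper's version of the argument supplies: midpoint concavity at \emph{all} pairs of rationals $t_1<t_2$, not just Farey-adjacent ones. For a general pair one writes $w_1,w_2$ for balanced periodic words of slopes $t_1,t_2$ with a common length $n$ and lets $w_3$ be the balanced word of slope $\tfrac{t_1+t_2}{2}$ with $|w_3|=|w_1w_2|$ and $|w_3|_1=|w_1w_2|_1$; then $w_3$ is \emph{not} a cyclic rotation of $w_1w_2$, and one needs $\rho([w_3]_\mbba)\ge\rho([w_1]_\mbba[w_2]_\mbba)$, which is precisely the J{\o}rgensen--Smith trace-maximization statement (Theorem~\ref{thm:J-S}). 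Your Farey shortcut avoids Theorem~\ref{thm:J-S} but thereby discards the information needed to get concavity at all. Two smaller points: your claim that "every non-trivial word sends $\overline{I_\mbba^+}$ strictly into $I_\mbba^+$" is false for the words $0^n,1^n$ (Lemma~\ref{lem:cone}(iv) explicitly excludes them); for a co-parallel pair in $\mbfh^2$ one instead obtains a strictly invariant cone by slightly enlarging $[s_1,s_2]$, after which your continuity argument (equivalently, the uniform-hyperbolicity argument in the proof of Lemma~\ref{lem:concave parabolic}) goes through. And the assertion that $([w_{\alpha_1}]_\mbba,[w_{\alpha_3}]_\mbba)$ is co-parallel for tree-adjacent slopes does hold, but it requires the inductive use of Lemma~\ref{lem:balanced pair}(i)(ii) along the Stern--Brocot tree rather than only the fixed-point bookkeeping you sketch.
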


\begin{proof}[Proof of Theorem~\ref{thm:main} for a co-parallel pair in $\mbfh^2$]

Let $\mbba\in\mbfh^2$ be a co-parallel pair. By reduction to absurdity, assume that the conclusion of Theorem~\ref{thm:main} fails for $\mbba$. Then, since $\mbba$ admits some maximizing measure, by Lemma~\ref{lem:concave}, there exists a maximizing measure $\mu$ of $\mbba$ which is not Sturmian. By Lemma~\ref{lem:extremal}~(ii), any invariant measure supported in $\supp\mu$ is also maximizing, so by Lemma~\ref{lem:concave} and ergodic decomposition, we may further assume that $\mu$ is ergodic. Since $\mu$ is ergodic and not Sturmian, by Lemma~\ref{lem:recurrent and balanced} and Lemma~\ref{lem:Sturmian slope}~(i), there exists $s\in\supp\mu$ such that $s$ is recurrent and not balanced. 

Fixing $s$ as above, let $w_0,w_1,w_2$ and  $\xi$ be as given in Proposition~\ref{prop:J-S}. Since $s$ is recurrent and $w_0$ is a factor of $s$, for an arbitrarily large $n$, there exist words $x_1,x_2,\cdots,x_n$ such that 
\begin{equation}\label{eq:non-balance n}
  t:=w_0x_1w_0x_2\cdots w_0x_nw_0
\end{equation}
is a factor of $s$.

\begin{clm}
There exist $i_k\in\{1,2\}, 1\le k\le n$ such that
\begin{equation}\label{eq:contradiction norm}
\|[t']_{\mbba}\| \ge \delta\xi^n\cdot \|[t]_{\mbba}\| \qquad\text{for}\quad t':=w_{i_1}x_1w_{i_2}x_2\cdots w_{i_n}x_nw_0,
\end{equation}
where $\delta>0$ depends only on $\mbba$. 
\end{clm}

\begin{proof}[Proof of Claim]
Due to Lemma~\ref{lem:trace-norm}, it suffices to find $i_k\in\{1,2\}, 1\le k\le n$ such that the following holds:
\begin{equation}\label{eq:contradiction}
  \tr[t']_{\mbba} \ge \xi^n\cdot \tr[t]_{\mbba}.
\end{equation}
To define $i_k,1\le k\le n$ inductively on $k$, for each $0\le k\le n$, we introduce the following notations ($u_0=v_n=\epsilon$):
\[
u_k=w_{i_1}x_1\cdots w_{i_k}x_k, \quad v_k=x_{k+1}w_0x_{k+2} \cdots w_0x_{n}w_0, \quad t_k=u_kw_0v_k.
\]
Now given $0\le k <n$, assume that $i_j$ has been chosen for $1\le j\le k$. Then substituting $z=v_ku_k$ into \eqref{eq:trace larger}, we can find $i_{k+1}\in \{1,2\}$ such that
\[
\tr [u_kw_{i_{k+1}}v_k]_{\mbba} = \tr [w_{i_{k+1}}v_ku_k]_{\mbba} \ge \xi\cdot\tr[w_0v_ku_k]_{\mbba} = \xi \cdot\tr[t_k]_{\mbba}.  
\]
Also note that by definition, $t_{k+1}=u_{k+1}w_0v_{k+1}=u_kw_{i_{k+1}}v_k$. Then by induction, $i_1,\cdots,i_n$ have been defined and
\[
\tr[t']_{\mbba} = \tr[t_n]_{\mbba} \ge \xi\cdot \tr[t_{n-1}]_{\mbba} \ge \cdots \ge \xi^n\cdot \tr[t_0]_{\mbba} = \xi^n\cdot \tr[t]_{\mbba},
\]
which completes the proof of \eqref{eq:contradiction}.
\end{proof}
Now combining \eqref{eq:contradiction norm} with Lemma~\ref{lem:extremal} and $|t|=|t'|$, we obtain that 
\[
C_\mbba\cdot\hrho(\mbba)^{|t|} \ge \|[t']_{\mbba}\| \ge \delta\xi^n\cdot \|[t]_{\mbba}\| \ge C_\mbba^{-1}\delta \xi^n\cdot\hrho(\mbba)^{|t|}. 
\]
Since $n$ can be arbitrarily large, this leads to a contradiction.
\end{proof}

\subsection{The general situation}\label{sse:general} In this subsection we complete the proof of Theorem~\ref{thm:main} for an arbitrary balanced pair $\mbba\in\mbfg^2$. For preparation we need to generalize Lemma~\ref{lem:trace-norm} and Lemma~\ref{lem:concave} to Lemma~\ref{lem:trace-norm parabolic} and Lemma~\ref{lem:concave parabolic} respectively. 

\begin{lem}\label{lem:trace-norm parabolic}
Let $\mbba\in\mbfa^2$ be a balanced pair. Given $N\ge 2$, let 
\[
\mclh_N:=\{x\in \cup_{n=N}^\infty\{0,1\}^n: x\ne 0^Ny,1^Ny, y0^N, y1^N, \forall y\in\{0,1\}^*\}.
\]
Then there exists $\delta_N>0$ such that $\tr [x]_\mbba \ge \delta_N\cdot\|[x]_\mbba\|$ holds for any $x\in \mclh_N$.
\end{lem}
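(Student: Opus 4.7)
The plan is to adapt the proof of Lemma~\ref{lem:trace-norm}: using the length-$N$ prefix and suffix of $x$ together with Lemma~\ref{lem:cone}~(iv), we will trap the fixed points $s_X,u_X$ of $X:=[x]_\mbba$ into compact subsets of $I_\mbba^\pm$ uniformly over $x\in\mclh_N$. Since conjugation of $X$ alters $\tr X/\|X\|$ only by a bounded multiplicative constant (depending only on the conjugating matrix), we may assume that $\mbba=(A_0,A_1)$ is in one of the normal forms of Definition~\ref{defn:balanced pair}. The defining condition of $\mclh_N$ forces $x$ to contain both symbols, so by Lemma~\ref{lem:cone}~(ii), $X\in\mbfh$; moreover, in each normal form a direct check shows that $A_0A_1$ and $A_1A_0$ are positive and that each $[r]_\mbba$ is non-negative with no zero row or column, and factoring $x$ through an occurrence of $01$ or $10$ then shows that $X$ is positive. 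Consequently Lemma~\ref{lem:Vieta}~(i) applies and gives $b/c=-s_Xu_X$, where $b,c$ are the off-diagonal entries of $X$.

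Let $u$ denote the prefix of $x$ of length $N$ and write $x=uy$, with the convention $Y=I$ if $y=\epsilon$. By the hypothesis $u\ne 0^N,1^N$, Lemma~\ref{lem:cone}~(iv) tells us that $\underline{U}(\overline{I_\mbba^+})$ is a compact subset of $I_\mbba^+$. On the other hand, Lemma~\ref{lem:cone}~(i) combined with the fact that $\underline{Y}$ is a homeomorphism of $\mbbp^1$ gives $\underline{Y}(\overline{I_\mbba^+})\subset\overline{I_\mbba^+}$, so
\[
s_X=\underline{X}(s_X)=\underline{U}\bigl(\underline{Y}(s_X)\bigr)\in\underline{U}(\overline{I_\mbba^+}).
\]
Because there are only finitely many admissible length-$N$ prefixes $u$, taking the union produces a single compact $K^+\subset I_\mbba^+$ with $s_X\in K^+$ for every $x\in\mclh_N$. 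The mirror argument, applied to the length-$N$ suffix of $x$ and to $\underline{X}^{-1}$ via the corresponding parts of Lemma~\ref{lem:cone} for inverses, provides a compact $K^-\subset I_\mbba^-$ with $u_X\in K^-$.

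In each normal form we have $I_\mbba^+\subset(0,+\infty)$ and $I_\mbba^-\subset(-\infty,0)$, so compactness supplies constants $0<\eta\le M<\infty$ with $K^+\subset[\eta,M]$ and $-K^-\subset[\eta,M]$. Thus $b/c=-s_Xu_X\in[\eta^2,M^2]$, and Lemma~\ref{lem:Vieta}~(ii) applied with $\theta:=\min(\eta^2,M^{-2})$ yields $\tr X\ge\sqrt{\theta}\,\|X\|$; reversing the initial conjugation then gives the lemma with the desired $\delta_N$. The substantive case is~(p), in which $I_\mbba^\pm$ are unbounded half-lines and the parabolic fixed points of $A_0,A_1$ lie on $\partial I_\mbba^\pm$: for a long run $u=0^k$ or $1^k$, the image $\underline{U}(\overline{I_\mbba^+})$ degenerates toward a boundary point of $I_\mbba^+$, and the restriction built into the definition of $\mclh_N$ is precisely what excludes such runs and delivers a uniform trapping region.
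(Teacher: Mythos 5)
Your proof is correct and follows essentially the same route as the paper's: reduce to a normal form, use the length-$N$ prefix and suffix of $x$ together with Lemma~\ref{lem:cone}~(i)(iv) to trap $s_X$ and $u_X$ in compact subsets of $I_\mbba^{\pm}$ uniformly over $\mclh_N$, and conclude via Lemma~\ref{lem:Vieta}. You are somewhat more explicit than the paper about the positivity of $X$ and the harmlessness of the conjugation, but the argument is the same.
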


\begin{proof}
We may assume that $\mbba$ is in a normal form. For each $x\in\mclh_N$, denote $X=[x]_{\mbba}\in\mbfh$. Denote $\mclx_N=\{x\in\mclh_N:|x|=N\}$. By Lemma~\ref{lem:cone}~(i)(iv), there exist real numbers $u_2<u_1<0<s_1<s_2$ such that 
\[
\bigcup_{x\in \mclh_N} \underline{X}(\overline{I_{\mbba}^+}) = \bigcup_{x\in \mclx_N} \underline{X}(\overline{I_{\mbba}^+}) \subset [s_1,s_2]\,,\, \bigcup_{x\in \mclh_N} \underline{X}^{-1}(\overline{I_{\mbba}^-}) = \bigcup_{x\in \mclx_N} \underline{X}^{-1}(\overline{I_{\mbba}^-}) \subset [u_2,u_1].
\]
Then both $s_X\in [s_1,s_2]$ and $u_X\in [u_2,u_1]$ hold for any $x\in\mclh_N$, and hence the conclusion follows from Lemma~\ref{lem:Vieta}.
\end{proof}

\begin{lem}\label{lem:concave parabolic}
  Let $\mbba\in \mbfg^2$ be a balanced pair. Then the function $\alpha\mapsto\chi(\mbba,\mu_\alpha)$ is continuous and strictly concave on $[0,1]$. Therefore, there exists at most one $\alpha\in[0,1]$ such that $\mu_\alpha$ is a maximizing measure of $\mbba$.
\end{lem}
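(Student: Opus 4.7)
The plan is to adapt Laskawiec's proof of Lemma~\ref{lem:concave} \cite[Proof of Theorem~9.8]{Las25} from co-parallel to arbitrary balanced pairs, now using Theorem~\ref{thm:J-S} (valid for any balanced pair) in place of its co-parallel predecessor. I first reduce to $\mbba\in\mbfa^2$ in a normal form, since the rescaling $(A,B)\mapsto(aA,bB)$ only adds the affine term $(1-\alpha)\log a+\alpha\log b$ to $\chi(\mbba,\mu_\alpha)$, which preserves both concavity and strict concavity. Continuity of $\alpha\mapsto\chi(\mbba,\mu_\alpha)$ will follow from Lemma~\ref{lem:extremal}(i), which forces the sub-additive convergence $\frac{1}{n}\int\log\|[\cdot]_\mbba\|\,d\mu\to\chi(\mbba,\mu)$ to be uniform over $\mclm$, together with the weak-$*$ continuity of $\alpha\mapsto\mu_\alpha$.

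For concavity, I would use that for any rational $\alpha=p/q\in(0,1)$ in lowest terms $\chi(\mbba,\mu_{p/q})=\frac{1}{q}\log\rho([w^{(p/q)}]_\mbba)$, where $w^{(p/q)}$ is a balanced period in $\mclx_{p,q}$ (the trace-maximizer identified by Theorem~\ref{thm:J-S}). For a pair of Farey neighbors $\alpha_1=p_1/q_1<\alpha_2=p_2/q_2$, the classical Christoffel-tree identity gives $w^{(\alpha_3)}=w^{(\alpha_1)}w^{(\alpha_2)}$ for the Farey mediant $\alpha_3=(p_1+p_2)/(q_1+q_2)$, and so $[w^{(\alpha_3)}]_\mbba=X_1X_2$ with $X_i:=[w^{(\alpha_i)}]_\mbba\in\mbfh$ by Lemma~\ref{lem:balanced pair}(i). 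Each $X_i$ strictly contracts the projective interval $\overline{I_\mbba^+}$ by Lemma~\ref{lem:cone}(iv), so Birkhoff's sub-multiplicativity of the Hilbert-metric contraction coefficient for cone-preserving maps yields $\rho(X_1X_2)\ge\rho(X_1)\rho(X_2)$, i.e.\ $(q_1+q_2)\chi(\mbba,\mu_{\alpha_3})\ge q_1\chi(\mbba,\mu_{\alpha_1})+q_2\chi(\mbba,\mu_{\alpha_2})$; density of Farey mediants plus continuity then propagate concavity to all of $[0,1]$.

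Strict concavity reduces to the strict form of the Birkhoff inequality, which holds unless $X_1$ and $X_2$ share both eigendirections, equivalently commute in $\SL(2,\mbbr)$. A short rigidity argument, using Lemma~\ref{lem:balanced pair}(i) together with the fact that distinct coprime Christoffel words cannot be powers of a common word, rules out commutativity for distinct Farey neighbors and gives the strict inequality. I expect the main obstacle to be executing this rigidity cleanly in the parabolic cases~(m) and~(p), where $A$ or $B$ itself may be parabolic; the saving grace is that for any $\alpha\in(0,1)$ every Christoffel product $[w^{(\alpha)}]_\mbba$ contains both letters and hence lies in $\mbfh$ by Lemma~\ref{lem:balanced pair}(i), so the hyperbolic Birkhoff analysis still applies, and Lemma~\ref{lem:trace-norm parabolic} supplies the trace-norm comparability on $\mclh_N$ needed to translate Theorem~\ref{thm:J-S} into the norm-based limits defining $\chi$. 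The boundary cases $\alpha\in\{0,1\}$ reduce to verifying $\chi(\mbba,\mu_{1/2})=\tfrac12\log\rho(AB)>\tfrac12\log(\rho(A)\rho(B))$, equivalently $\rho(AB)>\rho(A)\rho(B)$, which follows from Lemma~\ref{lem:balanced pair}(iv) combined with the non-strict Birkhoff bound $\rho(A^2B^2)\ge\rho(A)^2\rho(B)^2$.
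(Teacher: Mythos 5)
Your skeleton (reduce to a normal form in $\mbfa^2$, work with rational slopes via balanced/Christoffel words, get mid-point concavity from Farey mediants together with Theorem~\ref{thm:J-S}, then pass to all of $[0,1]$ by continuity) is the same as the paper's, but two of your key steps are not sound as stated. First, the central inequality $\rho(X_1X_2)\ge\rho(X_1)\rho(X_2)$ does \emph{not} follow from Birkhoff sub-multiplicativity of the Hilbert-metric contraction coefficient for cone-preserving maps: cone contraction alone is compatible with the strict \emph{reverse} inequality. For instance, $X_1=\begin{pmatrix}2&1\\1&1\end{pmatrix}$ and $X_2=\begin{pmatrix}1&1\\1&2\end{pmatrix}$ are positive, lie in $\mbfh$, and strictly contract the positive cone, yet $\rho(X_1X_2)=3+2\sqrt2<\bigl(\tfrac{3+\sqrt5}{2}\bigr)^2=\rho(X_1)\rho(X_2)$; this is exactly the crossing-pair phenomenon used in Lemma~\ref{lem:balanced pair}~(iv). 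The inequality you need is a property of \emph{co-parallel} pairs, not of cone-preserving ones, so the real work is to verify that $([w^{(\alpha_1)}]_\mbba,[w^{(\alpha_2)}]_\mbba)$ is co-parallel (the paper does this by tracking Laskawiec's Lemma~9.4 and Corollary~9.5 to see the pair is balanced, then noting both matrices lie in $\mbfh$ by Lemma~\ref{lem:balanced pair}~(i)) and then to invoke the co-parallel strict inequality (Lemma~\ref{lem:concave}). Your ``rigidity/commutativity'' discussion and your boundary step, which both lean on the non-strict Birkhoff bound $\rho(A^2B^2)\ge\rho(A)^2\rho(B)^2$ for cone-preserving maps, inherit the same defect.

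Second, the continuity claim is unjustified. Lemma~\ref{lem:extremal}~(i) gives a uniform \emph{upper} bound $\frac1n\int\log\|[\cdot]_\mbba\|\dif\mu\le\log\hrho(\mbba)+\frac{\log C_\mbba}{n}$, which only yields upper semi-continuity of $\mu\mapsto\chi(\mbba,\mu)$ (it is an infimum of continuous functions by sub-additivity); it does not force uniform convergence over $\mclm$, and in the parabolic cases (m) and (p) the cocycle is not dominated on the full shift, which is precisely where lower semi-continuity is delicate. The paper circumvents this by proving continuity of $\mu\mapsto\chi(\mbba,\mu)$ only on the sets $\mclm_N$ of measures supported in the subshift of finite type $\Sigma_N$ avoiding $0^N$ and $1^N$, where the pair is uniformly hyperbolic and the exponent becomes an integral of a continuous function against $\mu$ via an invariant section $e^+$; the endpoints $\alpha=0,1$ are then handled separately by the explicit limits $f(\tfrac1{n+1})=\tfrac1{n+1}\log\rho(A^nB)\to f(0)$ together with concavity on $(0,1)$. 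Your proposal needs an argument of this kind (or some other substitute) before the passage from mid-point concavity on rationals to strict concavity on $[0,1]$ can be completed.
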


\begin{proof}
Note that if $\mbba=(aA,bB)$ for some $(A,B)\in\mbfa^2$ and $a,b\in\mbbr\setminus\{0\}$, then by \eqref{eq:LyaExp} and Lemma~\ref{lem:Sturmian slope}~(ii),
\[
\chi(\mbba,\mu_\alpha)= \chi((A,B),\mu_\alpha) + (1-\alpha)\log |a| + \alpha \log |b|, \quad\forall\alpha\in[0,1].
\]
Therefore, we may assume that $\mbba\in \mbfa^2$.  We may further assume that $\mbba$ is in a normal form, so that $I_\mbba^+$ introduced in \eqref{eq:cone} is well-defined. Define $f(\alpha)=\chi(\mbba,\mu_\alpha), \alpha\in[0,1]$. Given $N\ge 3$, let us first show that $f$ is (continuous and) strictly concave on $J_N=(\frac{1}{N},1-\frac{1}{N})$. To this end, we shall follow the argument of Laskawiec in the second paragraph of \cite[Proof~of~Theorem~9.8]{Las25}, which consists of two steps: firstly, prove that $f$ is strictly mid-point concave on $J_N\cap \mbbq$; secondly, prove that $f$ is continuous on $J_N$. 

For the first step, it suffices to show that, under our settings, for $t_1,t_2\in J_N\cap \mbbq$ with $t_1<t_2$, the corresponding words $w_1,w_2,w_3$ and associated matrices $\Pi_i=[w_i]_\mbba, i=1,2,3$ defined in \cite[Proof~of~Theorem~9.8]{Las25} still satisfy 
\[
\rho(\Pi_3)\ge \rho(\Pi_1\Pi_2)>\rho(\Pi_1)\cdot\rho(\Pi_2).
\]
Since $|w_1w_2|=|w_3|,|w_1w_2|_1=|w_3|_1$ and since $w_3^\infty$ is balanced, by Theorem~\ref{thm:J-S}, $\rho(\Pi_3)\ge \rho(\Pi_1\Pi_2)$ holds. On the other hand, by following the proofs of the ``furthermore" part of \cite[Lemma~9.4]{Las25} and \cite[Corollary~9.5]{Las25}, it can be directly checked that $(\Pi_1,\Pi_2)$ is a balanced pair; moreover, by Lemma~\ref{lem:balanced pair}~(i), $\Pi_1,\Pi_2\in\mbfh$. Therefore,  $(\Pi_1,\Pi_2)$ is a co-parallel pair, so by Lemma~\ref{lem:concave}, $\rho(\Pi_1\Pi_2)>\rho(\Pi_1)\cdot\rho(\Pi_2)$. 

For the second step, let 
\begin{equation}\label{eq:sft}
  \Sigma_N=\{s\in \{0,1\}^\mbbn : \text{neither $0^N$ nor $1^N$ is a factor of $s$}\},
\end{equation}
and let $\mclm_N=\{\mu\in\mclm:\supp\mu\subset \Sigma_N\}$. By Lemma~\ref{lem:Sturmian slope}~(ii), $\supp\mu_\alpha\subset \Sigma_N$ holds for every $\alpha\in J_N$. Therefore, to prove that $f$ is continuous on $J_N$, it suffices to replace the role of \cite[Lemma~9.6]{Las25} there with the following claim.

\begin{clm}
 The function $\mu\mapsto\chi(\mbba,\mu),\mu\in\mclm$ is continuous on $\mclm_N$.
\end{clm}

\begin{proof}[Proof of Claim]
Consider 
\[
\Sigma_N'=\{\underline{i}\in \{0,1\}^\mbbz : \text{neither $0^N$ nor $1^N$ is a factor of $\underline{i}$}\}
\]
as the natural extension of $\Sigma_N$ and let $\mclm_N'$ be the collection shift-invariant Borel probability measures for the sub-shift $\Sigma_N'$. The Lyapunov exponent $\chi(\mbba,\nu)$ for $\nu\in \mclm_N'$ can be similarly defined, and it suffices to prove that $\nu\mapsto\chi(\mbba,\nu)$ is continuous on $\mclm_N'$.

Note that $\Sigma_N'$ is a sub-shift of finite type. Also note that by Lemma~\ref{lem:cone}~(iv), if $w$ is a word of length $N$ with $w\ne 0^N,1^N$, then for $W=[w]_{\mbba}$, $\underline{W}(\overline{I_\mbba^+})\subset I_\mbba^+$. Then by \cite[Theorem~2.3]{ABV10}, it can be seen that $\mbba$ is uniformly hyperbolic with respect to $\Sigma_N'$. To proceed, we shall follow the proof of Equation~(1.7) in \cite[\S~2.1]{BM16}. Fixing a norm $|\cdot|$ on $\mbbr^2$, thanks to the uniform hyperbolicity, it is not difficult to show that there exists a continuous map 
\[
e^+:\Sigma_N'\to \{(x,y)^t\in\mbbr^2 : y>0,x/y\in I_{\mbba}^+\}
\] 
such that for every $\underline{i}=(i_n)_{n\in\mbbz}\in\Sigma_N'$, we have (denote $\mbba=(A_0,A_1)$ below): 
\begin{itemize}
  \item $\frac{A_{i_0}e^+(\underline{i})}{|A_{i_0}e^+(\underline{i})|} = \frac{e^+((i_{n+1})_{n\in\mbbz})}{|e^+((i_{n+1})_{n\in\mbbz})|}$;
  \item $\lim_{n\to\infty}\frac{1}{n}\log|A_{i_n}\cdots A_{i_1}A_{i_0}e^+(\underline{i})|=\lim_{n\to\infty}\frac{1}{n}\log\|A_{i_n}\cdots A_{i_1}A_{i_0}\|$, provided that the limit on the right hand side exists.
\end{itemize} 
Indeed, $e^+(\underline{i})$ can be taken as $(x(\underline{i}),1)^t$, where $x(\underline{i})$ denotes the unique point in $\cap_{n=1}^\infty \underline{A_{-i_1}\cdots A_{-i_n}}(I_\mbba^+)$. Then it follows that $\chi(\mbba,\nu)=\int_{\Sigma_N'}\log|A_{i_0}e^+(\underline{i})|\dif\nu(\underline{i})$, which implies that $\chi(\mbba,\nu)$ is continuous in $\nu$.
\end{proof}

Since $N$ can be arbitrarily large, we have proved that $f$ is strictly concave on $(0,1)$. On the other hand, it is easy to see that as $n\to \infty$, $f(\frac{1}{n+1})=\frac{1}{n+1}\log\rho(A^nB)\to \log\rho(A) = f(0)$ and $f(\frac{n}{n+1})=\frac{1}{n+1}\log\rho(AB^n)\to \log\rho(B) = f(1)$. Combining this with $f$ being concave on $(0,1)$, we conclude that $f$ is continuous at $0,1$, which completes the proof.
\end{proof}

\begin{proof}[Proof of Theorem~\ref{thm:main}]
Let us follow the proof in the co-parallel case, arguing by reduction to absurdity. Assuming that the conclusion of Theorem~\ref{thm:main} fails for $\mbba$, then thanks to Lemma~\ref{lem:concave parabolic}, we can still find an ergodic maximizing measure $\mu$ of $\mbba$ which is not Sturmian, and hence there exists $s\in \supp\mu$ which is recurrent and not balanced. Then for every $n$, there exists a factor $t$ of $s$ of the form \eqref{eq:non-balance n}, and to get a contradiction as in the co-parallel case, it suffices to show that \eqref{eq:contradiction norm} still holds for some $\delta>0$ dependent only on $\mbba$ and $|w_0|$. Due to Proposition~\ref{prop:J-S}~(i), $|t|=|t'|$ and $|t|_1=|t'|_1$, so we may assume that $\mbba\in \mbfa^2$. Moreover, $w_0\ne 0^{|w_0|}, 1^{|w_0|}$, so by Lemma~\ref{lem:trace-norm parabolic}, the problem is again reduced to establishing \eqref{eq:contradiction}, which completes the proof.
\end{proof}

\section{Further discussions}\label{se:FD}

In this section we shall discuss some relation between our result and previous results in the literature, and also give some applications of Theorem~\ref{thm:main}. The main results in this section are Proposition~\ref{prop:balanced} and Proposition~\ref{prop:family}. For preparation let us begin with Proposition~\ref{prop:slope cont}.

From now on denote 
\[
\mbfb:=\{\mbba\in \mbfg^2: \mbba \text{ is a balanced pair}\}.
\] 
For every $\mbba\in\mbfb$, by Theorem~\ref{thm:main}, there exists a unique $\tau(\mbba)\in [0,1]$ such that $\mu_{\tau(\mbba)}$ is the unique maximizing measure of $\mbba$. This defines a function $\tau:\mbfb\to [0,1]$.

\begin{prop}\label{prop:slope cont}
Identifying $\mbfb$ as a subset of $\mbbr^8$, then the function $\tau:\mbfb\to[0,1]$ is continuous. 
\end{prop}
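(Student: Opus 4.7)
The plan is to use the uniqueness provided by Theorem~\ref{thm:main} together with two continuity ingredients. For a convergent sequence $\mbba_n\to\mbba$ in $\mbfb$, compactness of $[0,1]$ reduces the statement to showing that every subsequential limit $\alpha^*$ of $\tau(\mbba_n)$ equals $\tau(\mbba)$. Writing $\mbba=(A_0,A_1)$ and $\mbba_n=(A_0^{(n)},A_1^{(n)})$, since $\mu_{\tau(\mbba_n)}$ is maximizing for $\mbba_n$ we have $\chi(\mbba_n,\mu_{\tau(\mbba_n)})=\log\hrho(\mbba_n)$; if the right-hand side converges to $\log\hrho(\mbba)$ and the left-hand side converges to $\chi(\mbba,\mu_{\alpha^*})$, then $\mu_{\alpha^*}$ is a maximizing measure for $\mbba$ and Theorem~\ref{thm:main} forces $\alpha^*=\tau(\mbba)$. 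The proof thus reduces to establishing: (a) continuity of $\hrho$ on $\mbfb$; and (b) joint continuity of $(\mbba,\alpha)\mapsto\chi(\mbba,\mu_\alpha)$ on $\mbfb\times[0,1]$, where I invoke the standard weak-$*$ continuity of $\alpha\mapsto\mu_\alpha$.

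Fact (a) is easy: in each normal form of Definition~\ref{defn:balanced pair} the two matrices have disjoint fixed-point sets on $\mbbp^1$, hence no common invariant line, so every balanced pair is irreducible, and continuity of the joint spectral radius on irreducible pairs is a classical result (cf.~\cite{Jun09}).

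Fact (b) is the main obstacle; I will treat the interior and the boundary of $[0,1]$ separately. For $\alpha_0\in(0,1)$, pick $N$ with $\alpha_0\in J_N=(\tfrac{1}{N},1-\tfrac{1}{N})$; by Lemma~\ref{lem:Sturmian slope}~(ii) all Sturmian measures with slope near $\alpha_0$ are supported on $\Sigma_N$ from \eqref{eq:sft}. The uniform hyperbolicity of $\mbba$ on the two-sided extension $\Sigma_N'$, established inside the proof of Lemma~\ref{lem:concave parabolic}, is an open and stable condition in $\mbba$, so the continuous section $e^+_\mbba:\Sigma_N'\to\mbbr^2$ constructed there exists for all $\mbba$ in a neighborhood of $\mbba_0$ and depends continuously on $\mbba$ as the fixed point of a uniform cone contraction. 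The resulting integral formula
\[
\chi(\mbba,\mu_\alpha)=\int_{\Sigma_N'}\log|A_{i_0}e^+_\mbba(\underline{i})|\,d\nu_\alpha(\underline{i}),
\]
with $\nu_\alpha$ the natural extension of $\mu_\alpha$, is then visibly jointly continuous in $(\mbba,\alpha)$ near $(\mbba_0,\alpha_0)$.

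At an endpoint, say $\alpha_0=0$ with $\mu_0=\delta_{0^\infty}$, joint continuity will follow by sandwiching. Upper semi-continuity of $\chi$ on $\mbfg^2\times\mclm$ is standard because $\chi(\mbba,\mu)=\inf_{n\ge 1}\tfrac{1}{n}\int\log\|A_{i_n}\cdots A_{i_1}\|\,d\mu$ is an infimum of jointly continuous functions, and $\mu_{\alpha_n}\to\delta_{0^\infty}$ weakly as $\alpha_n\to 0$. For the matching lower bound, the concavity of $\alpha\mapsto\chi(\mbba_n,\mu_\alpha)$ from Lemma~\ref{lem:concave parabolic} gives
\[
\chi(\mbba_n,\mu_{\alpha_n})\ge (1-\alpha_n)\log\rho(A_0^{(n)})+\alpha_n\log\rho(A_1^{(n)}),
\]
and as $(\mbba_n,\alpha_n)\to(\mbba,0)$ the right-hand side tends to $\log\rho(A_0)=\chi(\mbba,\mu_0)$, closing the argument.
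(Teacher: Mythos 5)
Your argument is correct in outline, but it takes a genuinely different route from the paper. The paper's proof is a two-line application of an abstract lemma (Lemma~\ref{lem:implicit}): since $\alpha\mapsto\chi(\mbba,\mu_\alpha)$ is continuous and strictly concave (Lemma~\ref{lem:concave parabolic}), continuity of the argmax $\tau$ follows once one knows that $\mbba\mapsto f(\mbba,\alpha)$ is continuous for a \emph{dense} set of slopes only; taking $D=[0,1]\cap\mbbq$ makes this trivial, because for rational $\alpha=p/q$ the measure $\mu_\alpha$ is periodic and $\chi(\mbba,\mu_\alpha)=\frac{1}{q}\log\rho([w]_\mbba)$ is manifestly continuous in $\mbba$. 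This sidesteps everything you labour over: continuity of $\hrho$, weak-$*$ continuity of $\alpha\mapsto\mu_\alpha$, and joint continuity of $\chi$ at irrational and endpoint slopes. Your route --- subsequential limits, the identity $\chi(\mbba_n,\mu_{\tau(\mbba_n)})=\log\hrho(\mbba_n)$, and uniqueness from Theorem~\ref{thm:main} --- is sound, and your two continuity ingredients check out: (a) is classical (the joint spectral radius is in fact continuous on all of $\mbfg^2$; irreducibility only upgrades this to local Lipschitz continuity); for (b) the interior case does require verifying that uniform hyperbolicity over $\Sigma_N'$ persists for pairs near $\mbba_0$ and that the section $e^+_\mbba$ varies continuously with the pair, while the endpoint sandwich combining upper semi-continuity of $\chi=\inf_n(\cdot)$ with the concavity lower bound is a nice touch. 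The trade-off: your argument is longer and leans on more machinery (stability of uniform hyperbolicity, weak-$*$ continuity of Sturmian measures in the slope), but it yields the stronger byproduct that $(\mbba,\alpha)\mapsto\chi(\mbba,\mu_\alpha)$ is jointly continuous on $\mbfb\times[0,1]$, which the paper never needs to establish.
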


\begin{proof}
Thanks to Lemma~\ref{lem:concave parabolic}, we can apply Lemma~\ref{lem:implicit} below to the function $f(\mbba,\alpha)=\chi(\mbba,\mu_\alpha),(\mbba,\alpha)\in\mbfb\times[0,1]$ by taking $D=[0,1]\cap\mbbq$. Then the conclusion follows.
\end{proof}

\begin{lem}\label{lem:implicit}
  Let $X$ be a topological space and suppose that $f:X\times[0,1]\to \mbbr$ satisfies the following properties.
\begin{itemize}
  \item For every $x\in X$, $\alpha\mapsto f(x,\alpha)$ is continuous and strictly concave on $[0,1]$, so this function attains its maximum at a unique point $\tau(x)\in[0,1]$.
  \item There exists a dense subset $D$ of $[0,1]$ such that for every $\alpha\in D$, $x\mapsto f(x,\alpha)$ is continuous on $X$.
\end{itemize}
Then the function $\tau:X\to [0,1]$ is continuous.
\end{lem}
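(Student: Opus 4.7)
The plan is to argue by contradiction, using strict concavity of $f(x,\cdot)$ to separate $\tau(x)$ from any candidate limit while relying on the density of $D$ to evaluate $f$ only at second-coordinate points where continuity in $x$ is already known. Fix $x\in X$; to prove continuity of $\tau$ at $x$, consider any net $x_\lambda\to x$ in $X$ (sequences suffice if $X$ is first-countable, as in the intended application where $\mathbf{B}\subset\mathbb{R}^8$). Since $[0,1]$ is compact, it is enough to show that every convergent subnet of $(\tau(x_\lambda))$ has limit $\tau(x)$. Passing to a subnet, one may therefore assume $\tau(x_\lambda)\to \beta^*\in[0,1]$, and the goal reduces to proving $\beta^*=\tau(x)$.

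Suppose for contradiction that $\beta^*\neq \tau(x)$; without loss of generality $\beta^*<\tau(x)$ (the other case is symmetric). Using density of $D$ in $[0,1]$, choose $\alpha_1,\alpha_2\in D$ with $\beta^*<\alpha_1<\alpha_2<\tau(x)$. On the one hand, strict concavity of $\alpha\mapsto f(x,\alpha)$ with its unique maximum at $\tau(x)$ makes this function strictly increasing on $[0,\tau(x)]$, so $f(x,\alpha_1)<f(x,\alpha_2)$. On the other hand, once $\lambda$ is far enough along the net one has $\tau(x_\lambda)<\alpha_1<\alpha_2$; strict concavity of $\alpha\mapsto f(x_\lambda,\alpha)$ with unique maximum at $\tau(x_\lambda)$ then makes this function strictly decreasing on $[\tau(x_\lambda),1]$, yielding $f(x_\lambda,\alpha_1)>f(x_\lambda,\alpha_2)$. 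Because $\alpha_1,\alpha_2\in D$, hypothesis (ii) lets me pass to the limit along the net to obtain $f(x,\alpha_1)\ge f(x,\alpha_2)$, contradicting the previous strict inequality. Hence $\beta^*=\tau(x)$, completing the argument.

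The only delicate step is the selection of the two comparison points $\alpha_1,\alpha_2\in D$ strictly between $\beta^*$ and $\tau(x)$; this is precisely where density is used, and it survives in the boundary cases $\tau(x)\in\{0,1\}$ because the open interval between $\beta^*$ and $\tau(x)$ is nonempty whenever $\beta^*\neq \tau(x)$. Everything else is a routine consequence of compactness of $[0,1]$ and of the two qualitative hypotheses on $f$, so no additional regularity of $X$ is required.
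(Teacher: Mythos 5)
Your proof is correct and rests on the same key idea as the paper's: use density of $D$ to choose comparison points at which $f(\cdot,\alpha)$ is continuous in $x$, and exploit the strict monotonicity of a strictly concave function on either side of its unique maximizer. The paper argues directly rather than by contradiction with nets --- picking $\alpha_1<\alpha_2<\tau(x_0)<\beta_1<\beta_2$ in $D$ within $\ve$ of $\tau(x_0)$ and showing the resulting strict inequalities persist on a neighborhood, trapping $\tau(x)$ in $(\alpha_1,\beta_2)$ --- but the two arguments are essentially the same.
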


\begin{proof}
  Given $x_0\in X$ and $\ve>0$, it suffices to show that there exists an open neighborhood $U$ of $x_0$ such that $|\tau(x)-\tau(x_0)|<\ve$ for any $x\in U$. Let us only prove the statement under the additional assumption $\tau(x_0)\in (0,1)$; the special cases $\tau(x_0)=0,1$ are similar and omitted.
  
Since $\tau(x_0)\in (0,1)$ and $D$ is dense in $[0,1]$, there exist $\alpha_1,\alpha_2,\beta_1,\beta_2\in D$ with $\tau(x_0)-\ve <\alpha_1<\alpha_2<\tau(x_0)<\beta_1<\beta_2<\tau(x_0)+\ve$. Since the function $\alpha\mapsto f(x_0,\alpha)$ is strictly concave and since it attains its maximum at $\tau(x_0)$, we have $f(x_0,\alpha_1)<f(x_0,\alpha_2)$ and $f(x_0,\beta_1)>f(x_0,\beta_2)$. Then by continuity, there exists an open neighborhood $U$ of $x_0$ such that both $f(x,\alpha_1)<f(x,\alpha_2)$ and $f(x,\beta_1)>f(x,\beta_2)$ hold for any $x\in U$. For such $x$, by the concavity of $\alpha\mapsto f(x,\alpha)$, we have $\alpha_1<\tau(x)<\beta_2$.
\end{proof}

All the finiteness counter-examples in \cite{BM02,BTV03,Koz05,HMST11,MS13,JP18,Ore18} are constructed in the following way: for certain pair of non-negative matrices $(A,B)\in \mbfg^2$, prove that there exist finiteness counter-examples in the one-parameter family $(A,tB),t>0$. The proposition below shows that such $(A,B)$ must be a balanced pair.

\begin{prop}\label{prop:balanced}
Let $(A,B)\in\mbfg^2$ and suppose that there exists $T\in\GL(2,\mbbr)$ such that both $T^{-1}AT$ and $T^{-1}BT$ are non-negative. Then $(A,B)$ is a balanced pair iff there exists $t>0$ such that $(A,tB)$ is a finiteness counter-example.
\end{prop}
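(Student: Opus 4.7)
The plan is to prove both implications separately, using Proposition~\ref{prop:slope cont} and a boundary analysis for the ``balanced gives a counter-example'' direction, and a contrapositive case analysis for the converse.

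For the direct implication, suppose $\mbba=(A,B)$ is balanced. Since balancedness is invariant under positive scaling of either matrix (just rescale the scalar $b$ in Definition~\ref{defn:balanced pair}), $(A,tB)$ is balanced for every $t>0$. Theorem~\ref{thm:main} then gives a unique Sturmian maximizing measure $\mu_{\tau(t)}$ with $\tau(t):=\tau(A,tB)\in[0,1]$, and Proposition~\ref{prop:slope cont} makes $t\mapsto\tau(t)$ continuous on $(0,\infty)$. From \eqref{eq:LyaExp} together with Lemma~\ref{lem:Sturmian slope}~(ii) one derives the functional equation $\chi((A,tB),\mu_\alpha)=\chi((A,B),\mu_\alpha)+\alpha\log t$, valid for all $\alpha\in[0,1]$ and $t>0$. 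For any fixed $\alpha_0\in(0,1]$, the right-hand side at $\alpha=\alpha_0$ tends to $-\infty$ as $t\to 0^+$, while $\chi((A,tB),\mu_0)=\log\rho(A)$ stays constant; combined with the strict concavity of $\alpha\mapsto\chi((A,tB),\mu_\alpha)$ from Lemma~\ref{lem:concave parabolic}, this forces $\tau(t)<\alpha_0$ for all sufficiently small $t$, so $\tau(t)\to 0$ as $t\to 0^+$. A symmetric argument yields $\tau(t)\to 1$ as $t\to\infty$. The intermediate value theorem then produces $t^*>0$ with $\tau(t^*)\notin\mbbq$. The Sturmian measure $\mu_{\tau(t^*)}$ is not supported on any periodic orbit, while any spectrum maximizing product of $(A,t^*B)$ would produce a periodic maximizing measure, contradicting the uniqueness in Theorem~\ref{thm:main}. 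Hence $(A,t^*B)$ is a finiteness counter-example.

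For the converse I argue the contrapositive: assuming $(A,B)$ is not balanced, I show $(A,tB)$ admits an SMP for every $t>0$. Simultaneous conjugation by $T$ preserves balancedness, the joint spectral radius, and the existence of an SMP, so I may assume $A$ and $B$ themselves are non-negative. After rescaling each by $1/\sqrt{\det\,\cdot\,}$, each of $A,B$ is either a positive scalar multiple of $I$ (in which case both matrices commute with every word, making one of $A,B$ a trivial SMP) or an element of $\mbfa$. A non-negative element of $\mbfa$ is either a positive element of $\mbfh$ of the form $H_{s,u}^\lambda$ with $u<0<s$, or a non-negative parabolic $P_x$ or $P_x^t$. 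Conjugating by $T=\bigl(\begin{smallmatrix}0&1\\1&0\end{smallmatrix}\bigr)$ if necessary (which exchanges $0$ and $\infty$, sends $P_x^t\mapsto P_x$, and sends $H_{s,u}^\lambda\mapsto H_{1/s,1/u}^\lambda$ while preserving positivity of the hyperbolic partner), one checks that every hyperbolic-plus-parabolic non-negative pair matches case (m), and every pair of parabolics of different types matches case (p); such pairs are balanced. The only genuinely non-balanced non-negative configurations are therefore (a) two parabolics of the same type ($\{P_x,P_y\}$ or $\{P_x^t,P_y^t\}$), which commute via $P_xP_y=P_{x+y}$, making every word unipotent so $\hrho(A,tB)=1$ and every word an SMP; and (b) two positive hyperbolic matrices in a crossing configuration, say $u_A<u_B<0<s_A<s_B$ after conjugation.

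In case (b), both $\underline{A}$ and $\underline{B}$ map $[s_A,s_B]$ into itself and dually $\underline{A}^{-1},\underline{B}^{-1}$ preserve $[u_A,u_B]$, which corresponds to a common pair of strictly invariant convex cones in $\mbbr^2$ admitting a dominated splitting. A standard Birkhoff/Hilbert-metric contraction argument on this cone structure (cf.\ the ``crossing-pair'' inequality $\rho(CD)<\rho(C)\rho(D)$ invoked from \cite{JS90,PS21,Las25}) yields $\hrho(A,tB)=\max(\rho(A),t\rho(B))$ for every $t>0$, attained by $A$ or $tB$ itself, so $(A,tB)$ is never a finiteness counter-example. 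This completes the contrapositive. The main technical obstacle is case (b): converting the common invariant projective intervals into the uniform bound $\hrho(A,tB)=\max(\rho(A),t\rho(B))$ requires a careful Birkhoff/Perron--Frobenius calculation in the invariant positive cone, whereas the boundary analysis of $\tau(t)$ and the finite classification of non-balanced non-negative pairs are comparatively routine.
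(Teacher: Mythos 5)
Your ``only if'' direction is correct and is essentially the paper's argument: continuity of $t\mapsto\tau(A,tB)$ from Proposition~\ref{prop:slope cont}, boundary limits $0$ and $1$, and the intermediate value theorem, followed by the observation that a spectrum maximizing product would produce a periodic maximizing measure contradicting uniqueness. Your derivation of the boundary limits from the scaling identity $\chi((A,tB),\mu_\alpha)=\chi((A,B),\mu_\alpha)+\alpha\log t$ together with concavity is a legitimate way to fill in the paper's ``it is easy to see''.

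The converse, however, has a genuine gap in your case (b). You claim that for a crossing pair the common forward-invariant interval $[s_A,s_B]$ (equivalently a common strictly invariant cone) plus ``a standard Birkhoff/Hilbert-metric contraction argument'' yields $\hrho(A,tB)=\max(\rho(A),t\rho(B))$. This mechanism cannot work as stated: a co-parallel pair has exactly the same structure --- a common strictly invariant projective interval $I_{\mbba}^+$ (Lemma~\ref{lem:cone}) and hence a common invariant positive cone with Birkhoff contraction for mixed words --- yet co-parallel pairs are precisely the source of finiteness counter-examples, with $\hrho(A,tB)$ strictly larger than $\max(\rho(A),t\rho(B))$ for a range of $t$. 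What separates crossing from co-parallel is the cyclic order of the four fixed points, not cone invariance, and the statement you actually need (for a crossing pair one of the two generators is always a spectrum maximizing product) is the nontrivial result \cite[Theorem~3.4]{PS21} or \cite[Corollary~5.4]{Las25}, which the paper simply cites; the inequality $\rho(CD)<\rho(C)\rho(D)$ that you mention parenthetically is an ingredient of that result but does not by itself control $\rho$ of arbitrary words. A secondary, smaller gap: your enumeration of ``genuinely non-balanced'' non-negative configurations omits reducible pairs sharing a common eigenvector beyond the scalar case (e.g.\ two triangular hyperbolic matrices, or a triangular hyperbolic together with a parabolic fixing the same direction); such pairs are not balanced and must be disposed of separately (one of the two generators is then a spectrum maximizing product), which is how the paper opens its case analysis before splitting into balanced versus crossing.
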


\begin{proof}
Let us prove the ``only if" part first. For every $t>0$, since $(A,tB)$ is a balanced pair, by Theorem~\ref{thm:main}, there exists a unique $\alpha(t)\in [0,1]$ such that $\mu_{\alpha(t)}$ is the unique maximizing measure of $(A,tB)$. By Proposition~\ref{prop:slope cont}, the function $t\mapsto\alpha(t)$ is continuous on $(0,+\infty)$. By the definition of $\alpha(t)$, it is easy to see that $\lim_{t\to 0^+}\alpha(t)=0$ and $\lim_{t\to +\infty}\alpha(t)=1$. Then there exists (uncountably many) $t>0$ with $\alpha(t)\notin\mbbq$, and $(A,tB)$ is a finiteness counter-example for such $t$.  

For the ``if" part, we may assume that $A,B\in\mbfg$ are both non-negative. Since for $r,s,t>0$, $(A,tB)$ is a balanced pair iff $(rA,sB)$ is a balanced pair, it suffices to prove the following statement.

\begin{clm}
  Let $A,B\in\SL(2,\mbbr)\setminus\{I\}$ be non-negative matrices. If $\mbba=(A,B)$ does not admit a spectrum maximizing product, then $\mbba$ is a balanced pair.
\end{clm} 
It remains to prove the claim. To begin with, note that if $C\in\SL(2,\mbbr)\setminus\{I\}$ is non-negative, then 
$C\in\mbfa$, and hence either $C$ is in the form of \eqref{eq:hyperbolic}, or $C\in\{P_x,P_x^t\}$ for some $x>0$. If $A$ and $B$ share a common eigenvector, it is well-known (and easy to see) that either $A$ or $B$ is a spectrum maximizing product of $\mbba$. Then we may assume that $A$ and $B$ share no common eigenvector, so there are three cases as follows.

\begin{itemize}
  \item $\{A,B\}=\{H_{s_1,u_1}^{\lambda_1},H_{s_2,u_2}^{\lambda_2}\}$ with $s_1\ne s_2$ and $u_1\ne u_2$.
  \item $\{A,B\}=\{H_{s,u}^\lambda,P_x\}$ or $\{H_{s,u}^\lambda,P_x^t\}$.
  \item $\{A,B\}=\{P_x,P_y^t\}$. 
\end{itemize}
Therefore, $\mbba$ is either a balanced pair or a crossing pair (recall its definition ahead of Lemma~\ref{lem:balanced pair}). According to \cite[Theorem~3.4]{PS21} or \cite[Corollary~5.4]{Las25},  for a crossing pair $(A,B)$, either $A$ or $B$ is a spectrum maximizing product. The proof is done.
\end{proof}

\begin{rmk}
 From Proposition~\ref{prop:balanced} we know that a result of Morris and Sidorov \cite[Theorem~2.3]{MS13} can be applied to any balanced pair. In particular, given $(A,B)\in\mbfb$, for every $t>0$, letting $\mu_{\alpha(t)}$ be the unique maximizing measure of $(A,tB)$, then we have: the set $\{t>0:\alpha(t)\notin \mbbq\}$ is of Hausdorff dimension zero. As an immediate corollary of this fact and Fubini's theorem, identifying $\mbfb$ as a Borel subset of $\mbbr^8$ (the interior of $\mbfb$ consists of co-parallel pairs), we have: for Lebesgue almost every $\mbba\in\mbfb$, $\mbba$ admits a unique maximizing measure and this measure is supported on a periodic orbit. 
\end{rmk}

To prove Proposition~\ref{prop:family}, we need a result of Panti and Sclosa \cite[Theorem~3.6]{PS21} as follows. Let us give an alternative proof of it as an application of Theorem~\ref{thm:main}.

\begin{lem}\label{lem:P-S}
  Let $(A,B)\in\mbfa^2$ be a balanced pair with $\tr A=\tr B$. Then its unique maximizing measure is $\mu_{1/2}$.
\end{lem}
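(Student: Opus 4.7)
The plan is to combine the strict concavity of Lemma~\ref{lem:concave parabolic} with a symmetry argument coming from the hypothesis $\tr A=\tr B$. By Theorem~\ref{thm:main} the unique maximizing measure of $\mbba=(A,B)$ is some Sturmian measure $\mu_{\tau(\mbba)}$, and the function $f(\alpha):=\chi(\mbba,\mu_\alpha)$ is continuous and strictly concave on $[0,1]$. Hence it is enough to prove the functional symmetry $f(\alpha)=f(1-\alpha)$ for every $\alpha\in[0,1]$: strict concavity then forces the unique maximum at $\alpha=1/2$, and the uniqueness part of Theorem~\ref{thm:main} identifies the maximizing measure as $\mu_{1/2}$.

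By continuity of $f$, it suffices to verify the symmetry at every rational $\alpha=p/q$ in lowest terms. For such $\alpha$, $\mu_\alpha$ is supported on a single periodic orbit of period $q$ represented by a balanced word $w$ of length $q$ with $|w|_1=p$, so $f(p/q)=\tfrac{1}{q}\log\rho([w]_\mbba)$. Flipping $0$ and $1$ throughout $w$ preserves the balanced property while changing the density of $1$'s from $\alpha$ to $1-\alpha$, so Lemma~\ref{lem:recurrent and balanced} places the periodization $\bar w^\infty$ inside $S_{1-\alpha}$, giving $f(1-p/q)=\tfrac{1}{q}\log\rho([\bar w]_\mbba)$. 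By Lemma~\ref{lem:balanced pair}(i) both $[w]_\mbba$ and $[\bar w]_\mbba$ lie in $\mbfa\subset\SL(2,\mbbr)$ with trace at least $2$, and the spectral radius of such a matrix is a monotone function of its trace, so the desired identity reduces to the purely algebraic claim $\tr[w]_\mbba=\tr[\bar w]_\mbba$.

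The main step, and the only one that uses the hypothesis $\tr A=\tr B$, is the classical Fricke--Vogt fact: for every finite word $w$ there is a universal polynomial $P_w$ with integer coefficients such that $\tr[w]_{(A,B)}=P_w(\tr A,\tr B,\tr(AB))$ for every $(A,B)\in\SL(2,\mbbr)^2$. Since swapping $0\leftrightarrow 1$ in $w$ amounts to swapping $A\leftrightarrow B$ in the pair, $\tr[\bar w]_\mbba=P_w(\tr B,\tr A,\tr(AB))$, which under $\tr A=\tr B$ coincides with $\tr[w]_\mbba$. The Fricke--Vogt identity itself can be derived on the spot by induction on $|w|$ from the Cayley--Hamilton trace relation $\tr(XY)+\tr(XY^{-1})=\tr(X)\tr(Y)$, an immediate consequence of Lemma~\ref{lem:Cayley-Hamilton}(i); this brief classical input is the only substantive obstacle in the proof.
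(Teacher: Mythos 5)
Your proof is correct, but it takes a genuinely different route from the paper's. The paper's proof is a direct symmetry argument: it invokes Friedland's theorem (or a direct computation) to produce $T\in\GL(2,\mbbr)$ with $T^{-1}AT=B$ and $T^{-1}BT=A$, notes that the unique maximizing measure of $(B,A)$ is simultaneously $\mu_{1-\alpha}$ (by swapping symbols) and $\mu_\alpha$ (by conjugation invariance), and concludes $\alpha=1/2$ in three lines, with no appeal to concavity. You instead establish the functional symmetry $\chi(\mbba,\mu_\alpha)=\chi(\mbba,\mu_{1-\alpha})$ on rational slopes via the Fricke--Vogt trace polynomial $\tr[w]_{(A,B)}=P_w(\tr A,\tr B,\tr(AB))$, extend by continuity, and then let strict concavity (Lemma~\ref{lem:concave parabolic}) pin the maximizer at $1/2$. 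The two external inputs are really two faces of the same classical fact --- for an irreducible pair, the triple $(\tr A,\tr B,\tr(AB))$ determines all word traces and, equivalently, the simultaneous conjugacy class --- but your packaging trades the conjugation for the trace polynomial plus the (already proved, so non-circular) concavity lemma. The paper's version is shorter and does not need Lemma~\ref{lem:concave parabolic}; yours has the mild advantage of making explicit that the whole function $\alpha\mapsto\chi(\mbba,\mu_\alpha)$ is symmetric about $1/2$, not just its maximizer. All the individual steps you use check out: $\mu_{p/q}$ is the uniform measure on a period-$q$ orbit so $f(p/q)=\tfrac1q\log\rho([w]_\mbba)$; the $0\leftrightarrow1$ flip preserves balancedness and sends $S_\alpha$ to $S_{1-\alpha}$; $[\bar w]_{(A,B)}=[w]_{(B,A)}$; and on $\mbfa$ equal traces give equal spectral radii since $\rho(X)=\tfrac12\bigl(\tr X+\sqrt{(\tr X)^2-4}\bigr)$.
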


\begin{proof}
For $A,B$ given in the lemma, it is not difficult to prove the following statement directly: there exists $T\in\GL(2,\mbbr)$ such that $T^{-1}AT=B$ and $T^{-1}BT=A$. Alternatively, this statement follows easily from a more general result of Friedland \cite[Theorem~2.1,~Theorem~2.9]{Fri83}; see also \cite[Lemma~2.2]{Koz25}.

Now suppose that $\mu_\alpha$ is the unique maximizing measure of $(A,B)$ for some $\alpha\in[0,1]$. Then $\mu_{1-\alpha}$ is the unique maximizing measure of $(B,A)$. However, from $(T^{-1}AT,T^{-1}BT)=(B,A)$ we know that $\mu_\alpha$ is also a maximizing measure of $(B,A)$, so $\alpha=1/2$. 
\end{proof}

\begin{prop}\label{prop:family}
Fix $u'<u<0<s<s'$ and $\lambda'>1$. Then there exist uncountably many  $\lambda\in[\lambda',+\infty)$ such that the co-parallel pair $\mbba_{\lambda}=(H_{s,u}^{\lambda}, H_{s',u'}^{\lambda'})$ is a finiteness counter-example.
\end{prop}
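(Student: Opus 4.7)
The plan is to leverage the slope function $\tau:\mbfb\to[0,1]$ from Proposition~\ref{prop:slope cont}: the family $\lambda\mapsto\mbba_\lambda$ is a continuous curve in $\mbfb$ (already in the normal form (h) of Definition~\ref{defn:balanced pair} with $T=I$), so $\lambda\mapsto\tau(\mbba_\lambda)$ is continuous on $[\lambda',+\infty)$, and every $\lambda$ with $\tau(\mbba_\lambda)\notin\mbbq$ yields a finiteness counter-example, since then the unique maximizing measure $\mu_{\tau(\mbba_\lambda)}$ fails to be supported on any periodic orbit. It therefore suffices to show that this continuous function takes at least two distinct values, for then its image contains a non-degenerate interval, and the preimage of the uncountable set of irrationals in that interval is itself uncountable.

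Two values will come from the opposite ends of $[\lambda',+\infty)$. At $\lambda=\lambda'$ the two matrices of $\mbba_{\lambda'}$ share the common trace $\lambda'+(\lambda')^{-1}$, so Lemma~\ref{lem:P-S} gives $\tau(\mbba_{\lambda'})=1/2$. As $\lambda\to+\infty$, I claim $\tau(\mbba_\lambda)\to 0$; granting this, the intermediate value theorem forces $\tau([\lambda',+\infty))\supset[0,1/2]$, which is all that is needed.

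The only non-routine step is the limit as $\lambda\to+\infty$, and this is the main obstacle. To prove it I would fix any sub-multiplicative matrix norm and use the elementary bounds $\|H_{s,u}^\lambda\|\le C_1\lambda$ (for $\lambda\ge 1$) and $\|H_{s',u'}^{\lambda'}\|\le C_2$, with $C_1,C_2$ independent of $\lambda$. Sub-multiplicativity then gives $\|[w]_{\mbba_\lambda}\|\le(C_1\lambda)^{|w|-|w|_1}C_2^{|w|_1}$; integrating against $\mu_\alpha$ via \eqref{eq:LyaExp} and using that $\mu_\alpha$-typical words have digit-$1$ frequency $\alpha$ (cf.\ Lemma~\ref{lem:Sturmian slope}) yields
\[
\chi(\mbba_\lambda,\mu_\alpha)\le(1-\alpha)\log(C_1\lambda)+\alpha\log C_2,\qquad\alpha\in[0,1].
\]
Combining this at $\alpha=\tau(\mbba_\lambda)$ with the trivial lower bound $\chi(\mbba_\lambda,\mu_{\tau(\mbba_\lambda)})\ge\chi(\mbba_\lambda,\mu_0)=\log\lambda$ and rearranging gives an upper bound on $\tau(\mbba_\lambda)$ of order $\log C_1/\log(C_1\lambda/C_2)$ once $\lambda$ is large, which vanishes as $\lambda\to\infty$. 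The entire proposition then assembles from these pieces.
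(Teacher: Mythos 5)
Your proposal is correct and follows essentially the same route as the paper: continuity of the slope function via Proposition~\ref{prop:slope cont}, the value $1/2$ at $\lambda=\lambda'$ from Lemma~\ref{lem:P-S}, the bound $\chi(\mbba_\lambda,\mu_\alpha)\le(1-\alpha)\log\lambda+C$ against $\chi(\mbba_\lambda,\mu_0)=\log\lambda$ to force $\tau(\mbba_\lambda)\to 0$, and the intermediate value theorem. Your norm-counting derivation of that bound just fills in the step the paper labels ``easy to see.''
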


\begin{proof}
  For each $\lambda\in [\lambda',+\infty)$, let $\mu_{f(\lambda)}$ be the unique maximizing measure of $\mbba_{\lambda}$. By Proposition~\ref{prop:slope cont}, this defines a continuous function $f:[\lambda',+\infty)\to [0,1]$. By Lemma~\ref{lem:P-S}, $f(\lambda')=1/2$. On the other hand, $\chi(\mbba_\lambda,\mu_0)=\log\lambda$, and it is easy to see that there exists $C>0$ such that
\[
\chi(\mbba_\lambda,\mu_\alpha)\le (1-\alpha)\log\lambda + C, \quad \forall \alpha\in (0,1],\forall\lambda\in[\lambda',+\infty).
\]
Then $\lim_{\lambda\to +\infty}f(\lambda)=0$, and the conclusion follows.
\end{proof}

\section*{Acknowledgements} 

The author would like to thank Bing Gao for valuable comments. This work is partially supported by the Taishan Scholar Project of Shandong Province (No. tsqnz20230614) and the National Natural Science Foundation of China (12571201).

\appendix

\section{Proof of Theorem~\ref{thm:J-S}}\label{se:J-S proof}


In this appendix we prove Theorem~\ref{thm:J-S} by modifying the proof of Proposition~\ref{prop:J-S}. To begin with we need Lemma~\ref{lem:cyclic} below to replace the role of Lemma~\ref{lem:tree} in the proof of Proposition~\ref{prop:J-S}.

\begin{lem}\label{lem:cyclic}
Let $x\in \{0,1\}^*$ and suppose that $x^\infty$ is not balanced. Then there exists a factor $\tx$ of $x^\infty$ with $|\tx|=|x|$ and distinct factors $u,v$ of $\tx$ such that the following hold:
\begin{itemize}
  \item [(i)] $\tx$ is a word over the alphabet $\{u,v\}$;
  \item [(ii)] $uu,vv$ are factors of $\tx^\infty$;
  \item [(iii)] $([u]_\mbba,[v]_\mbba)\in\mbfa^2$ is a balanced pair provided that $\mbba\in\mbfa^2$ is a balanced pair. 
\end{itemize}
\end{lem}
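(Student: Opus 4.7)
My plan is to adapt the inductive argument from the proof of Lemma~\ref{lem:tree}, simultaneously producing a suitable cyclic shift of $x$.

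I would first apply Lemma~\ref{lem:tree} to the recurrent non-balanced word $s:=x^\infty$. This yields distinct words $u,v$ such that $s$ is a word over $\{u,v\}$, both $uu$ and $vv$ are factors of $s$, and $([u]_\mbba,[v]_\mbba)$ is a balanced pair whenever $\mbba$ is. The remaining task is to exhibit a cyclic shift $\tilde{x}$ of $x$ (equivalently, a factor of $x^\infty$ of length $|x|$) that is itself a word over $\{u,v\}$; conditions (i)--(iii) will then follow almost by construction.

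To produce $\tilde{x}$, I would strengthen Lemma~\ref{lem:tree}'s induction with the invariant: at step $k$, some cyclic shift $\tilde{x}_k$ of $x$ decomposes as a word over $\{u_k,v_k\}$. The base $k=0$ is trivial, since $\tilde{x}_0=x$ is trivially over $\{0,1\}$. For the inductive step in Case 1 (where $v_kv_k$ is not a factor of $s$, so that $u_{k+1}=u_k$ and $v_{k+1}=v_ku_k$), I would write the decomposition $\tilde{x}_k=\omega_1\cdots\omega_r$ with each $\omega_i\in\{u_k,v_k\}$. Since $s=\tilde{x}_k^\infty$ admits no $v_kv_k$ factor, the cyclic block sequence $(\omega_1,\ldots,\omega_r)$ contains no two cyclically adjacent $v_k$'s, and it cannot consist entirely of $v_k$'s (otherwise $s=v_k^\infty$ and $v_kv_k$ would be a factor). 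Cyclically rotating the block sequence so that its terminal block is a $u_k$---this corresponds to a character-level cyclic shift of $\tilde{x}_k$, so the result is still a cyclic shift of $x$---permits every $v_k$-block to be paired with the following $u_k$-block to form a $v_{k+1}$, producing a decomposition over $\{u_{k+1},v_{k+1}\}$. Case 2 is handled symmetrically.

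At the termination step $K$, the factors $uu$ and $vv$ of $s=\tilde{x}_K^\infty$ force the cyclic block sequence of $\tilde{x}:=\tilde{x}_K$ to exhibit two cyclically adjacent $u$'s and two cyclically adjacent $v$'s; in particular $\tilde{x}$ uses both block types, so $u,v$ are distinct factors of $\tilde{x}$, proving (i) and (ii), while (iii) is inherited from Lemma~\ref{lem:tree}. The main subtlety, to my mind, is justifying the cyclic rotation step in the inductive construction---namely, ensuring that the Case hypothesis genuinely rules out the degenerate block sequence $\tilde{x}_k=v_k^r$ in Case 1 (or $\tilde{x}_k=u_k^r$ in Case 2)---but this follows directly from the non-factor condition selected at that step.
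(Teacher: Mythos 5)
Your argument is correct and is essentially the paper's own proof: the paper likewise reruns the induction of Lemma~\ref{lem:tree} while carrying along a length-$|x|$ window, performing exactly your rotation step (its modified Claim replaces $z=ay$ by $z'=ya$ so that the block sequence ends in $a$ before each $b$ absorbs the following $a$, and your observation that a terminal $b$ would create a forbidden $bb$ at the seam is the same non-degeneracy check). The one point you should spell out is that the conditions ``$v_kv_k$ is not a factor of $s$'' and ``$uu,vv$ are factors of $s$'' are statements about the \emph{block} decomposition, so your transfer of them to the cyclic block sequence of $\tilde{x}_k$ requires noting that the periodic block sequence of $\tilde{x}_k^{\infty}$ is a shift of the block sequence of $s$ built in Lemma~\ref{lem:tree} (immediate by induction, since both arise from the same deterministic grouping of the same bi-periodic sequence); the paper avoids this by keeping the witnesses $u_kw_ku_k$ and $v_kw_kv_k$ inside the finite window $x_k$ throughout, at the cost of first invoking \cite[Lemma~4.7]{HMST11} to start the induction from a non-balanced factor $x_0$ of length $|x|$, a step your version does not need.
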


\begin{proof}
Similar to Lemma~\ref{lem:tree}, the proof is based on the following self-evident observation with an inductive argument.
\begin{clm}
  Let $z$ be a finite word over the alphabet $\{a,b\}$. Suppose that:
  \begin{itemize}
    \item $bb$ is not a factor of $z^\infty$;
    \item there exists a word $w$ such that both $awa$ and $bwb$ are factors of $z$. 
  \end{itemize}
Denote $z=cy$ with $c\in\{a,b\}$. If $c=a$, let $z'=ya$; otherwise, let $z'=z$. Denote $w=aw'$. Then we have: 
\begin{itemize}
  \item  $z',w'$ are words over the alphabet $\{a,ba\}$;
  \item  $aw'a,baw'ba$ are factors of $z'$. 
\end{itemize}
\end{clm}
Now let us prove the lemma. Since $x^\infty$ is not balanced, it is known that there exists a factor $x_0$ of $x^\infty$ with $|x_0|=|x|$ such that $x_0$ is not balanced; see, for example, ``(i) implying (iii)" in \cite[Lemma~4.7]{HMST11}. Then by Lemma~\ref{lem:not balanced}, there exists a word $w_0$ such that for $u_0=0,v_0=1$, both $u_0w_0u_0$ and $v_0w_0v_0$ are factors of $x_0$. Now suppose that for some $k\ge 0$, words $x_k,u_k,v_k,w_k$ have been defined to satisfy the following induction hypothesis:
\begin{itemize}
  \item $x_k$ is a factor of $x^\infty$ with $|x_k|=|x|$ (as words over $\{0,1\}$);
  \item $u_k$ and $v_k$ are distinct factors of both $x_k$ and $w_k$;
  \item $x_k$ and $w_k$ are words over $\{u_k,v_k\}$;
  \item $u_kw_ku_k$ and $v_kw_kv_k$ are factors of $x_k$. 
\end{itemize}
Considering $x_k^\infty$ as an infinite word over $\{u_k,v_k\}$, if $v_kv_k$ is not a factor of $x_k^\infty$, then take $a=u_k,b=v_k$; otherwise, if $u_ku_k$ is not a factor of $x_k^\infty$, then take $a=v_k,b=u_k$. In both cases, we can apply the Claim to $z=x_k,w=w_k$ (as words over $\{a,b\}$), so that for $x_{k+1}=z',u_{k+1}=a,v_{k+1}=ba,w_{k+1}=w'$, $x_{k+1},u_{k+1},v_{k+1},w_{k+1}$ still satisfy the induction hypothesis above. Since in this process the length of $w_k$ (as a word over $\{0,1\}$) is strictly decreasing in $k$, the induction must be terminated at some step $k\ge 0$. Taking $\tx=x_k,u=u_k,v=v_k$ for such $k$, assertions (i) and (ii) in the lemma are satisfied; assertion (iii) follows from the construction of $u,v$ and Lemma~\ref{lem:balanced pair}~(ii).
\end{proof}

\begin{proof}[Proof of Theorem~\ref{thm:J-S}]
Let $x\in\mclx_{l,n}$ and suppose that $x^\infty$ is not balanced. It suffices to find $x'\in \mclx_{l,n}$ such that $\tr [x']_{\mbba}>\tr[x]_{\mbba}$. To this end, let $\tx$ and $u,v$ be as given in Lemma~\ref{lem:cyclic}. Due to Lemma~\ref{lem:cyclic}~(i)(ii), there exists a factor $y$ of $\tx^\infty$ with $|y|=|\tx|$ and $p,q\ge 1,m\ge 0$ such that:
\begin{itemize}
  \item either $y=w_0z$ for $w_0=uv^{q+1}(uv)^m u^{p+1}v$ and some word $z$ over $\{u,v\}$; 
  \item or $y=w_0z$ for $w_0=vu^{p+1}(vu)^m v^{q+1}u$ and some word $z$ over $\{u,v\}$; 
  \item or $y=u^{p+1}v^{q+1}$.
\end{itemize}
Note that $y\in \mclx_{l,n}$ and $\tr [y]_{\mbba}=\tr[x]_{\mbba}$. In the first case, we can choose $w_1,w_2$ as in the proof of Proposition~\ref{prop:J-S}, so that \eqref{eq:trace larger} holds for some $\xi>1$. Then $w_1z,w_2z\in \mclx_{l,n}$ and the desired inequality holds for some $x'\in\{w_1z,w_2z\}$. The second case can be dealt with in the same way as the first one. In the last case, let us take $x'=u^pv^quv \in \mclx_{l,n}$. Then for $U=[u]_\mbba,V=[v]_\mbba$, $(U,V)\in\mbfa^2$ is a balanced pair, and we have:
\[
\tr [x']_{\mbba} - \tr[x]_{\mbba} = \tr [(U^pV^q-V^qU^p)UV]=\Gamma_p(U)\Gamma_q(V)\cdot\tr((UV)^2-U^2V^2) >0,
\] 
where the latter ``$=$" is due to Lemma~\ref{lem:Chebyshev}~(ii), while the ``$>$" is due to $p,q\ge 1$ and Lemma~\ref{lem:balanced pair}~(iv). The proof is done. 
\end{proof}


\bibliographystyle{plain}             
\bibliography{refer}

\end{document}